 \numberwithin{equation}{section}
\renewcommand{\epsilon}{\varepsilon}
\renewcommand{\phi}{\varphi}
\newcommand{\ca}{\ensuremath{\mathcal A}\xspace}
\newcommand{\cb}{\ensuremath{\mathcal B}\xspace}
\newcommand{\cc}{\ensuremath{\mathcal C}\xspace}
\newcommand{\Cat}{\ensuremath{\mathbf{Cat}}\xspace}
\newcommand{\twocat}{\ensuremath{\mathbf{2}\textnormal{-}\mathbf{Cat}}\xspace}
\DeclareMathOperator{\Lex}{\mathbf{Lex}}
\DeclareMathOperator{\Reg}{\mathbf{Reg}}
\DeclareMathOperator{\BCop}{\mathbf{BCop}}
\DeclareMathOperator{\PhiCol}{\mathbf{\Phi\textnormal{-}Col}}
\DeclareMathOperator{\PhiLim}{\mathbf{\Phi\textnormal{-}Lim}}
\DeclareMathOperator{\TAlg}{\textnormal{T-Alg}}
\DeclareMathOperator{\PTAlg}{\textnormal{Ps-T-Alg}}
\DeclareMathOperator{\Kan}{\textnormal{LInj}}
\def\1c#1{\stackrel{#1}{\to}}
  \newtheorem{proposition}{Proposition}[section]
  \newtheorem{lemma}[proposition]{Lemma}
  \newtheorem{corollary}[proposition]{Corollary}
  \newtheorem{theorem}[proposition]{Theorem}
  \theoremstyle{definition}
  \newtheorem{notation}[proposition]{Notation}
  \newtheorem{example}[proposition]{Example}
  \newtheorem{examples}[proposition]{Examples}
  \theoremstyle{remark}
  \newtheorem{remark}[proposition]{Remark}
  \newcounter{c}
  \renewcommand{\[}{\setcounter{c}{1}$$}
  \newcommand{\etyk}[1]{\vspace{-7.4mm}$$\begin{equation}\Label{#1}
  \addtocounter{c}{1}}
  \renewcommand{\]}{\ifnum \value{c}=1 $$\else \end{equation}\fi}
\begin{document}
\leftmargini=2em \title{A recognition criterion for lax-idempotent pseudomonads}
\author{John Bourke}
\orcidlink{0000-0002-4617-7798}
\address{Department of Mathematics and Statistics, Masaryk University, Kotl\'a\v rsk\'a 2, Brno 61137, Czech Republic}
\email{bourkej@math.muni.cz}
\subjclass[2020]{Primary: 18N10, 18N15, 18C15}
\date{\today}
\maketitle

\maketitle
\begin{abstract}
We describe a simple criterion which makes it easy to recognise when a pseudomonad is lax-idempotent.  The criterion concerns the behaviour of colax bilimits of arrows --- certain comma objects --- and is easy to verify in examples.  Building on this, we obtain a new characterisation of lax-idempotent pseudomonads on $2$-categories with colax bilimits of arrows.  
\end{abstract}

\section{Introduction}
Lax-idempotent pseudomonads are lovely objects in $2$-category theory which can be described in several ways.  Firstly, they are pseudomonads $T$ for which structure is adjoint to unit, meaning that the multiplication $\mu \colon T^2 \to T$ is left adjoint to the unit $\eta_T \colon T \to T^2$.  In fact, a whole string of adjunctions $T\eta \dashv \mu \dashv \eta_T$ then  follows.  The concept is due to Kock \cite{Kock}, with further contributions by Z\" oberlein \cite{Zoberlein}, and the term KZ-pseudomonad is therefore also used.  A beautiful insight was later provided by Street \cite{Street} who observed that these structures are encoded by the algebraists' simplicial category when it is extended to a $2$-category using the pointwise ordering.


Since pseudomonads are already quite complicated, and lax-idempotent pseudomonads are pseudomonads satisfying further properties, one could be forgiven for thinking that they are very complicated structures.  However, the second main perspective, due to Marmolejo and Wood \cite{MW}, shows that they are in fact less complicated than pseudomonads: indeed, a lax-idempotent pseudomonad $T$ is completely specified by objects $TX$ and morphisms $\eta_X \colon X \to TX$ satisfying a couple of properties concerning the existence and preservation of left Kan extensions.  This much shorter but equivalent definition, called a left Kan pseudomonad, makes working with lax-idempotent pseudomonads quite easy in practise, and fits naturally with the idea that they describe cocompletion processes.

The present paper arises from the following questions.  Given a pseudofunctor $U \colon \ca \to \cb$, known to have a left biadjoint $F$, how do we know (1) whether $U$ is monadic in the appropriate bicategorical sense and (2) whether the induced pseudomonad $T=UF$ is lax-idempotent?  The first question is answered by Beck's theorem for pseudomonads \cite{CMV} which is the natural extension of Beck's classic monadicity theorem \cite{Beck} to two dimensions.  It is a useful result because it involves only checking properties of $U$, and does not require one to have an explicit description of $F$ or $T$.  

The main goal of the present paper is therefore to answer the second question: that is, to provide an easily verifiable condition on $U$ ensuring that the induced pseudomonad $T$ is lax-idempotent.  This is achieved in Theorem \ref{thm:laxidempotent} and the key criterion is what we call the \emph{colax bilimit property}.  It involves the lifting of certain limits along $U$ and is easily verified in all the examples we know. Furthermore, as explained in Example \ref{classic}, this criterion is inspired by, and generalises, an important construction in two-dimensional monad theory. 

In forthcoming joint work with Jel\'{i}nek \cite{BJ}, we will apply Theorem \ref{thm:laxidempotent} and the later results developed here to show that the abstract type theories of Uemura \cite{Uemura} are monadic for a colax-idempotent pseudomonad on a suitable $2$-category of signatures.

%

Let us now give an overview of the paper.  In Section \ref{sect:prelim} we quickly recall a few $2$-categorical concepts and facts that we will need in the paper.  In Section \ref{sect:core} we prove our core result, Theorem \ref{thm:laxidempotent}, which concerns when a biadjunction gives rise to a lax-idempotent pseudomonad.  In Section \ref{sect:char} we build on this result to characterise lax-idempotent pseudomonads on suitably complete $2$-categories --- see Theorem \ref{thm:char} --- and use it to give a refinement of Beck's theorem for pseudomonads, in the lax-idempotent setting, in Theorem \ref{thm:monadicity2}.

\subsection{Acknowledgements}

I became aware of the need for a recognition result for lax-idempotent pseudomonads through my collaboration with V\' it Jel\' inek investigating bicolimit presentations and monadicity for type theories.  I am grateful to V\' it for valuable discussions on this topic and also for feedback on an early draft of this paper, which led to the removal of a redundant hypothesis in Theorem \ref{thm:laxidempotent}.  

\section{Preliminaries}\label{sect:prelim}

We begin by briefly recalling a few $2$-categorical concepts and facts that we will use in what follows.

\subsection{Basic $2$-categorical concepts and notation}
In this paper we will work with (strict) $2$-categories denoted by $\ca,\cb,\cc$ and so on.  Between $2$-categories $\ca$ and $\cb$ is the $2$-category $Ps(\ca,\cb)$ of pseudofunctors, pseudonatural transformations and modifications.  Likewise, there are the $2$-categories of pseudofunctors and lax (respectively colax) natural transformations $Lax(\ca,\cb)$ and $Colax(\ca,\cb)$.

A \emph{biadjunction} $F \dashv U \colon \ca \to \cb$ consists of pseudofunctors $F$ and $U$ together with equivalences $\phi_{A,B} \colon \ca(FA,B) \to \cb(A,UB)$ pseudonatural in $A$ and $B$ \cite{Street}.  As with ordinary adjunctions, biadjunctions also admit a formulation in terms of units and counits.  In these terms, one has pseudonatural transformations $\epsilon \colon FU \to 1$ and $\eta \colon 1 \to UF$ together with invertible modifications $\theta \colon U\epsilon \circ \eta_U \cong 1_U$ and $\phi \colon \epsilon_F \circ F\eta \cong 1_F$ satisfying the swallowtail identities --- see, for instance, Section 1.7 of \cite{Gray}.  

The biadjunction is said to be \emph{lax-idempotent} if the invertible modification $\theta$ is the counit of an adjunction $U\epsilon \dashv \eta_U \in Ps(\ca,\cb)$.  

The theory of pseudomonads has been developed by multiple authors across a range of papers including \cite{Street, Marmolejo} and \cite{CMV}.  The recent thesis of \v{S}t\v{e}p\'{a}n \cite{Stepan} (sections 2.3-2.5) provides a useful overview and is what we will primarily follow.  A \emph{pseudomonad} involves a pseudofunctor $T \colon \cc \to \cc$, pseudonatural transformations $\mu \colon T^2 \to T$ and $\eta \colon 1 \to T$, together with invertible modifications $\alpha \colon \mu \circ T\mu \cong \mu \circ \mu_T$, $\lambda \colon \mu \circ \eta_T \cong 1_T$ and $\rho \colon 1_T \cong \mu \circ T\eta$ satisfying higher associativity and unit equations --- see Section 2.4 of \cite{Stepan}.  

For $T$ a pseudomonad on $\cc$, we let $\PTAlg_p$ denote its $2$-category of pseudoalgebras, pseudomorphisms and algebra $2$-cells.  This comes equipped with a forgetful $2$-functor $U^T \colon \PTAlg_p \to \cc$ which has a left biadjoint $F^T$ satisfying $T = U^T \circ F^T$.

The pseudomonad $T$ is said to be \emph{lax-idempotent} if the invertible modification $\lambda$ forms the counit of an adjunction $\mu \dashv \eta_T$ in $Ps(\cc,\cc)$.   Whilst this formulation of lax idempotency is the one we will use, there are several other formulations and we refer the reader to Section 2.5 of \cite{Stepan} for an overview of them.  By Proposition 2.5.12 of \cite{Stepan}, a biadjunction $F \dashv U$ is lax-idempotent if and only if the associated pseudomonad $T=UF$ is lax-idempotent.

\subsection{``Adjoint adjunctions'' and adjunctions with invertible counit}

It is well known that each equivalence in a $2$-category can be upgraded to an adjoint equivalence.  A useful tool for us is the less well known generalisation of this result to the context of adjunctions, which we now recall.

\begin{lemma}[Lemma 2.1.11 of \cite{RV}] \label{lemma:adjunction} 
Let $f \colon A \to B$ and $u\colon B \to A$ be morphisms in a $2$-category $\cc$. Then $f \dashv u$ if and only if there exists 2-cells $\eta \colon 1 \to uf$ and $\epsilon \colon fu \to 1$ such that the composites $u\epsilon \circ \eta_u \colon u \to  ufu \to u$ and $\epsilon_f \circ f\eta \colon f \to fuf \to f$ are invertible.  (Moreover, the counit of the adjunction can be taken to be $\epsilon$.)
\end{lemma}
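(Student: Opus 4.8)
The forward implication is immediate: if $f \dashv u$ with unit $\eta$ and counit $\epsilon$, then the two composites are precisely the triangle identities, so they equal $1_u$ and $1_f$ and are in particular invertible. For the converse, write $\alpha := u\epsilon \circ \eta_u$ and $\beta := \epsilon_f \circ f\eta$ for the two prescribed invertible $2$-cells. The plan is to leave $\epsilon$ untouched and replace $\eta$ by a corrected unit for which the genuine triangle identities hold; keeping $\epsilon$ fixed will also yield the final ``moreover'' clause. The natural candidate is
$$\eta' := \alpha^{-1}_f \circ \eta \colon 1 \to uf,$$
the old unit adjusted by the whiskered inverse of $\alpha$, and it then remains to verify that $(\eta',\epsilon)$ satisfies both triangle identities.

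First I would check $u\epsilon \circ \eta'_u = 1_u$. Expanding $\eta'_u = \alpha^{-1}_{fu}\circ \eta_u$ and using the interchange law to slide $\alpha^{-1}$ past $u\epsilon$, the left-hand side becomes $\alpha^{-1}\circ(u\epsilon \circ \eta_u) = \alpha^{-1}\circ\alpha = 1_u$, so the first identity is a routine computation.

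The hard part will be the second identity, $\epsilon_f \circ f\eta' = 1_f$, since an adjustment of $\eta$ designed to rectify the first triangle has no a priori reason to rectify the second. The key is a compatibility between $\alpha$ and $\beta$: unwinding the definitions together with the Godement identity $uf\eta \circ \eta = \eta_{uf}\circ\eta$ gives $u\beta \circ \eta = \alpha_f \circ \eta$, and whiskering by $f$ then yields $f\alpha_f \circ f\eta = fu\beta \circ f\eta$. Now $f\alpha_f$ and $fu\beta$ are whiskerings of $\alpha$ and $\beta$ at two different factors of the composite $f \circ u \circ f$, so by interchange they commute; combined with the displayed equation this upgrades to the inverse form $f\alpha^{-1}_f \circ f\eta = fu\beta^{-1}\circ f\eta$. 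Substituting this into $\epsilon_f \circ f\eta' = \epsilon_f \circ f\alpha^{-1}_f \circ f\eta$ and sliding $\beta^{-1}$ through $\epsilon_f$ by interchange then collapses the expression to $\beta^{-1}\circ\beta = 1_f$. Establishing this commutation-plus-inversion step is where the real content lies; everything else is bookkeeping with the interchange law.
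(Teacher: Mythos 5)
Your proof is correct, and it is essentially the standard argument for this statement (which the paper does not prove itself but only cites as Lemma 2.1.11 of \cite{RV}): keep $\epsilon$, replace $\eta$ by $\eta' = \alpha^{-1}_f \circ \eta$, get the first triangle identity by interchange, and get the second from the compatibility $u\beta \circ \eta = \alpha_f \circ \eta$ together with the fact that whiskerings at disjoint factors commute. All the individual steps (the Godement identity, the commutation of $f\alpha_f$ with $fu\beta$, and the final slide of $\beta^{-1}$ past $\epsilon_f$) check out, so there is nothing to fix.
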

%
%
%

Because lax-idempotent pseudomonads involve adjunctions $f \dashv u$ with invertible counit, we now give a variant of the above result starting with an invertible $2$-cell $\epsilon \colon fu \cong 1$.  In fact to check that $f \dashv u$ in this setting, the $2$-cell $\eta$ does not need to have any relationship to $\epsilon$.  Applying this will make our later calculations much easier.

\begin{corollary}\label{cor:lali}
Let $f \colon A \to B$ and $u\colon B \to A$ be morphisms in a $2$-category $\cc$ and suppose that we have an invertible $2$-cell $\epsilon \colon fu \cong 1$.  Then there is an adjunction $f \dashv u$ with counit $\epsilon$ if and only if there exists a $2$-cell $\eta \colon 1 \to uf$ such that the composites $f\eta$ and $\eta_u$ are invertible.
\end{corollary}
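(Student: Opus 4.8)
The plan is to deduce the corollary directly from Lemma \ref{lemma:adjunction}, using the single extra fact that an invertible $\epsilon$ has invertible whiskerings. Concretely, if $\epsilon \colon fu \cong 1$ is invertible then so are the whiskered $2$-cells $u\epsilon \colon ufu \to u$ and $\epsilon_f \colon fuf \to f$. Hence, for any $\eta \colon 1 \to uf$, the composite $u\epsilon \circ \eta_u$ is invertible precisely when $\eta_u$ is, and $\epsilon_f \circ f\eta$ is invertible precisely when $f\eta$ is, because post-composing with an invertible $2$-cell both reflects and preserves invertibility. This identifies the hypotheses of Lemma \ref{lemma:adjunction} with those appearing in the corollary.

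For the ``if'' direction, I would simply apply Lemma \ref{lemma:adjunction} to the given $\eta$ and $\epsilon$. The assumptions that $f\eta$ and $\eta_u$ are invertible, combined with the invertibility of $\epsilon_f$ and $u\epsilon$, make both composites $\epsilon_f \circ f\eta$ and $u\epsilon \circ \eta_u$ invertible; the lemma then produces an adjunction $f \dashv u$ whose counit, by its closing clause, may be chosen to be $\epsilon$. For the converse, given an adjunction $f \dashv u$ with counit $\epsilon$, I would take $\eta$ to be its unit. The triangle identities read $u\epsilon \circ \eta_u = 1_u$ and $\epsilon_f \circ f\eta = 1_f$, so inverting the whiskerings of $\epsilon$ yields $\eta_u = (u\epsilon)^{-1}$ and $f\eta = (\epsilon_f)^{-1}$, both invertible; thus the unit supplies the required $2$-cell.

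No serious obstacle arises: the argument is entirely formal once one observes that the two triangle composites decouple, each being a whiskering of $\eta$ pasted with a whiskering of the invertible $\epsilon$. The only point needing care is to keep the whiskering conventions straight --- which $2$-cell is whiskered on which side --- so that $u\epsilon$ and $\epsilon_f$ are correctly identified as invertible and the cancellation of invertible factors is legitimate. It is worth emphasising, as the remark preceding the statement does, that the given $\eta$ need bear \emph{no} relation to $\epsilon$: the decoupling just described is exactly what frees us from imposing the triangle identities in advance.
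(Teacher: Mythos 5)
Your proof is correct and follows essentially the same route as the paper: the paper's one-line argument is precisely your observation that invertibility of $\epsilon$ makes the whiskerings $u\epsilon$ and $\epsilon_f$ invertible, so the two triangle composites of Lemma \ref{lemma:adjunction} are invertible exactly when $\eta_u$ and $f\eta$ are. Your write-up simply spells out both directions in more detail than the paper does.
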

\begin{proof}
Since $\epsilon$ is invertible, the two triangular composites of Lemma \ref{lemma:adjunction} are invertible if and only if $f\eta$ and $\eta_u$ are invertible.
\end{proof}

\subsection{Colax bilimits of arrows}

The main technical condition in Theorem~\ref{thm:laxidempotent} concerns colax bilimits of arrows.\begin{footnote}{The terminology \emph{oplax bilimit} is also common.  We prefer \emph{colax bilimit} here as it is captures the fact that colax and lax structures are dual under the $(-)^{co}$-duality, which reverses $2$-cells.  This terminology also interacts better with our other main concept, lax-idempotent pseudomonad, whose dual is typically called colax-idempotent.}\end{footnote}  Given a morphism $f\colon A \to B$ in a $2$-category $\cc$, its colax bilimit is specified by an object $C$ together with $1$-cells and a $2$-cell as below
\[\begin{tikzcd}
	&& A \\
	C \\
	&& B
	\arrow["f", from=1-3, to=3-3]
	\arrow["p", from=2-1, to=1-3]
	\arrow[""{name=0, anchor=center, inner sep=0}, "q"', from=2-1, to=3-3]
	\arrow["\lambda"{pos=0.4}, between={0.2}{0.7}, Rightarrow, from=0, to=1-3]
\end{tikzcd}\]
such that for all $X \in \cc$, the induced functor $$\cc(X,C) \to \cc(X,B)/\cc(X,f) \colon k \mapsto (qk,\lambda k \colon qk \to fpk,pk)$$ to the comma category is an equivalence of categories.  

In elementary terms, this condition involves a $1$-dimensional and $2$-dimensional universal property.  The $1$-dimensional universal property says that given a further diagram as below left, there exists a $1$-cell $w \colon X \to C$ and invertible $2$-cells as depicted on the right below
\[\begin{tikzcd}
	&& A &&&& A \\
	X &&& {=} & X & C \\
	&& B &&&& B
	\arrow["f", from=1-3, to=3-3]
	\arrow["f", from=1-7, to=3-7]
	\arrow["u", from=2-1, to=1-3]
	\arrow[""{name=0, anchor=center, inner sep=0}, "v"', from=2-1, to=3-3]
	\arrow[""{name=1, anchor=center, inner sep=0}, "u", curve={height=-12pt}, from=2-5, to=1-7]
	\arrow["w", from=2-5, to=2-6]
	\arrow[""{name=2, anchor=center, inner sep=0}, "v"', curve={height=12pt}, from=2-5, to=3-7]
	\arrow["p", from=2-6, to=1-7]
	\arrow[""{name=3, anchor=center, inner sep=0}, "q"', from=2-6, to=3-7]
	\arrow["\theta", between={0.2}{0.7}, Rightarrow, from=0, to=1-3]
	\arrow["\cong", shift right=3, draw=none, from=1, to=2-6]
	\arrow["\lambda"{pos=0.4}, between={0.2}{0.7}, Rightarrow, from=3, to=1-7]
	\arrow["\cong", shift right=2, draw=none, from=2-6, to=2]
\end{tikzcd}\]
such that the above equality holds.  The $2$-dimensional universal property can be likewise extracted from the definition and since we won't need to make use of it in Theorem \ref{thm:laxidempotent}, we leave it to the reader.

\section{Recognising lax-idempotent biadjunctions and pseudomonads}\label{sect:core}

The following result is the core technical result of the paper.  It describes conditions under which the pseudomonad induced by a biadjunction is lax-idempotent.

\begin{theorem}\label{thm:laxidempotent}
Consider a biadjunction $F \dashv U \colon \ca \to \cb$ and suppose that $U \colon \ca \to \cb$ is locally full.  Then the biadjunction is lax-idempotent if
\begin{itemize}
\item \emph{Colax bilimit property}: each morphism $f \colon UA \to UB$ has a colax bilimit of the form
\[\begin{tikzcd}
	&& UA \\
	UC \\
	&& UB
	\arrow["f", from=1-3, to=3-3]
	\arrow["Up", from=2-1, to=1-3]
	\arrow[""{name=0, anchor=center, inner sep=0}, "Uq"', from=2-1, to=3-3]
	\arrow["\lambda"{pos=0.4}, between={0.2}{0.7}, Rightarrow, from=0, to=1-3]
\end{tikzcd}\]
\end{itemize}
In particular, the induced pseudomonad $T=UF$ is then lax-idempotent.
\end{theorem}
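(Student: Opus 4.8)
The plan is to establish lax-idempotency through the adjoint-with-invertible-counit criterion of Corollary~\ref{cor:lali}, manufacturing the missing unit from the colax bilimit property. By Proposition 2.5.12 of \cite{Stepan} it is equivalent to prove that the biadjunction is lax-idempotent, i.e. that $U\epsilon \dashv \eta_U$ with counit $\theta$ inside $Ps(\ca,\cb)$. Since $\theta \colon U\epsilon \circ \eta_U \cong 1_U$ is already invertible, Corollary~\ref{cor:lali} (applied in the $2$-category $Ps(\ca,\cb)$) reduces the whole statement to exhibiting a single modification $\eta' \colon 1_{UFU} \Rightarrow \eta_U \circ U\epsilon$ for which the whiskerings $U\epsilon \cdot \eta'$ and $\eta' \cdot \eta_U$ are invertible; crucially, $\eta'$ is allowed to bear no relation to $\theta$, which is exactly what makes the construction feasible. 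I would build $\eta'$ pointwise in $A \in \ca$ and only afterwards check that the components cohere into a modification, the coherence being forced by the universal properties in play.

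To produce the component $\eta'_A \colon 1_{UFUA} \Rightarrow \eta_{UA} \circ U\epsilon_A$, I would feed the arrow $U\epsilon_A \colon UFUA \to UA$ into the colax bilimit property, obtaining $UC$ together with $Up$, $Uq$ and $\lambda$. Its one-dimensional universal property names two comparison maps into $UC$: the section $s \colon UFUA \to UC$ classifying $(1_{UFUA}, U\epsilon_A, 1)$, and the map $t \colon UA \to UC$ classifying $(\eta_{UA}, 1_{UA}, \theta_A^{-1})$. These satisfy $Up \cdot s \cong 1$, $Uq \cdot s \cong U\epsilon_A$, $Up \cdot t \cong \eta_{UA}$ and $Uq \cdot t \cong 1_{UA}$, so that both $s$ and $t \circ U\epsilon_A$ project under $Uq$ to $U\epsilon_A$ while projecting under $Up$ to $1_{UFUA}$ and to $\eta_{UA}\circ U\epsilon_A$ respectively. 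A $2$-cell $s \Rightarrow t\circ U\epsilon_A$ lying over the identity of $U\epsilon_A$ would, on applying $Up$, be exactly the sought $\eta'_A$, and unwinding the defining comma equation shows that such a $2$-cell amounts to a comparison satisfying $U\epsilon_A \cdot \eta'_A = \theta_A^{-1}\cdot U\epsilon_A$; this equation already delivers invertibility of $U\epsilon \cdot \eta'$ for free.

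The genuine work, and the step I expect to be the main obstacle, is producing this comparison $2$-cell — equivalently, showing the relevant comma category has the requisite universal object — since the universal property of $UC$ only transports $2$-cells and cannot conjure one out of nothing. This is where local fullness of $U$, together with the fact that the bilimit is of the special shape $UC$, must be used. I would lift $\lambda$ along the (full) hom-functor of $U$ to a genuine colax cone $q \Rightarrow \epsilon_A \circ p$ witnessed in $\ca$, and then exploit that $\epsilon_A$ is a counit component — via the second swallowtail $\phi$ and the pseudonaturality of $\eta$ and $\epsilon$ — to construct the comparison at the level of $\ca$ and transport it back across the biadjunction $\cb(UFUA, UC) \simeq \ca(FUFUA, C)$. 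The subtlety is precisely that this comparison cannot be read off from the coherence data of a general pseudomonad; the colax bilimit hypothesis is exactly what supplies it.

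Finally I would dispatch the remaining invertibility. That of $U\epsilon \cdot \eta'$ is immediate from the comma equation above. For $\eta' \cdot \eta_U$ I would whisker that equation by $\eta_{UA}$ and use $\theta_A \colon U\epsilon_A \circ \eta_{UA} \cong 1_{UA}$ to reduce the claim to the triangle-type identity $\eta'_A \cdot \eta_{UA} = \eta_{UA}\cdot \theta_A^{-1}$, whose right-hand side is invertible; since $U$ is locally full and the comma projection is an equivalence, and therefore reflects isomorphisms, this identity can be verified after applying the appropriate projections and then transported back. With both whiskerings invertible, Corollary~\ref{cor:lali} yields $U\epsilon \dashv \eta_U$ with counit $\theta$, and Proposition 2.5.12 of \cite{Stepan} upgrades this to lax-idempotency of the induced pseudomonad $T = UF$, as required.
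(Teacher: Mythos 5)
Your overall strategy---reduce to the pointwise adjunctions $U\epsilon_A \dashv \eta_{UA}$, apply Corollary \ref{cor:lali} so that only a unit-like $2$-cell $\eta'_A \colon 1_{UFUA} \Rightarrow \eta_{UA}\circ U\epsilon_A$ with invertible whiskerings is required, and manufacture it from the colax bilimit property---matches the paper in outline. But you apply the colax bilimit property to the wrong arrow, and this makes the crucial step circular. You form the comma object of $U\epsilon_A$ and observe, correctly, that $\eta'_A$ would arise from a morphism $s \Rightarrow t\circ U\epsilon_A$ in the comma category $\cb(UFUA,UA)/\cb(UFUA,U\epsilon_A)$ lying over $1_{U\epsilon_A}$. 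Unwinding the definitions, however, such a morphism \emph{is} precisely a $2$-cell $\eta'_A\colon 1 \Rightarrow \eta_{UA}\circ U\epsilon_A$ satisfying $U\epsilon_A\cdot\eta'_A = \theta_A^{-1}\cdot U\epsilon_A$ on the nose---essentially a triangle identity for the very adjunction you are trying to construct. The universal property of the colax bilimit only transports such $2$-cells between cones into $2$-cells between classifying maps; it cannot create one, as you yourself flag. The escape route you sketch (lift $\lambda$ to $\ca$ by local fullness, then use the swallowtail identity and pseudonaturality) is not carried out and does not obviously close the circle: local fullness lifts $2$-cells between morphisms already in the image of $U$, whereas the comparison you need involves $t$ and $t\circ U\epsilon_A$, which are not supplied in the form $U(-)$. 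The same difficulty recurs in your final paragraph, where the identity $\eta'_A\cdot\eta_{UA} = \eta_{UA}\cdot\theta_A^{-1}$ is asserted rather than derived. So the proposal has a genuine gap at its central step.

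The paper's proof sidesteps all of this by taking the colax bilimit of $\eta_{UA}\colon UA \to UFUA$ rather than of $U\epsilon_A$. The identity cone on $\eta_{UA}$ gives a map $h\colon UA \to UC$; because the vertex has the special form $UC$, the freeness of $FUA$ extends $h$ along $\eta_{UA}$ to a map $Uh'\colon UFUA \to UC$; and the sought unit is then simply the colax cone $\lambda$ whiskered by $Uh'$, after identifying $Up\circ Uh'\cong U\epsilon_A$ and $Uq\circ Uh'\cong 1_{UFUA}$ (both checked after precomposition with $\eta_{UA}$, again by freeness). The point is that the only datum one must \emph{create} is a $1$-cell, which the biadjunction supplies; the $2$-cell $\eta'_A$ then comes for free from $\lambda$, and the two invertibility conditions of Corollary \ref{cor:lali} are verified by precomposition with $\eta_{UA}$ together with local fullness of $U$. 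I would recommend redoing your argument with the comma object taken over $\eta_{UA}$.
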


\begin{proof}
Let $\eta \colon 1 \Rightarrow UF$ and $\epsilon \colon FU \Rightarrow 1$ denote the pseudonatural transformations forming the unit and counit of the biadjunction.  
We must prove that $U\epsilon \dashv \eta_{U}$ with counit $\theta \colon U\epsilon \circ \eta_{U} \cong 1_{U}$.

  By doctrinal adjunction \cite{Kelly} this will be the case if and only if it is so pointwise -- that is, just when $U\epsilon_{A} \dashv \eta_{UA}$ with counit $\theta_A$.  Therefore, by Corollary \ref{cor:lali}, it suffices to find a $2$-cell $1_{UFUA} \Rightarrow \eta_{UA} \circ U\epsilon_{A}$ which becomes invertible both when post-composed by $U{\epsilon}_{A}$ and when pre-composed by $\eta_{UA}$.

To this end, consider the colax bilimit below.

\[\begin{tikzcd}
	&& UA \\
	UC \\
	&& UFUA
	\arrow["{\eta_{UA}}", from=1-3, to=3-3]
	\arrow["Up", from=2-1, to=1-3]
	\arrow[""{name=0, anchor=center, inner sep=0}, "Uq"', from=2-1, to=3-3]
	\arrow["\lambda"{pos=0.4}, between={0.2}{0.7}, Rightarrow, from=0, to=1-3]
\end{tikzcd}\]

By its universal property, we obtain a morphism $h \colon UA \to UC$ together with invertible $2$-cells as depicted below such that the two composite $2$-cells are equal.  (We will not, in fact, need to name these $2$-cells.)

\begin{equation}\label{eq:firstdiagram}
\begin{tikzcd}
	& UA &&&& UA \\
	UA && {=} & UA & UC \\
	& UFUA &&&& UFUA
	\arrow[""{name=0, anchor=center, inner sep=0}, "{\eta_{UA}}", from=1-2, to=3-2]
	\arrow["{\eta_{UA}}", from=1-6, to=3-6]
	\arrow["1", from=2-1, to=1-2]
	\arrow["{\eta_{UA}}"', from=2-1, to=3-2]
	\arrow[""{name=1, anchor=center, inner sep=0}, "1", curve={height=-12pt}, from=2-4, to=1-6]
	\arrow["h", from=2-4, to=2-5]
	\arrow[""{name=2, anchor=center, inner sep=0}, "{\eta_{UA}}"', curve={height=12pt}, from=2-4, to=3-6]
	\arrow["Up", from=2-5, to=1-6]
	\arrow[""{name=3, anchor=center, inner sep=0}, "Uq"', from=2-5, to=3-6]
	\arrow["{=}"{marking, allow upside down, pos=0.6}, draw=none, from=2-1, to=0]
	\arrow["\cong", shift right=3, draw=none, from=1, to=2-5]
	\arrow["\lambda"{pos=0.4}, between={0.2}{0.7}, Rightarrow, from=3, to=1-6]
	\arrow["\cong", shift right=2, draw=none, from=2-5, to=2]
\end{tikzcd}
\end{equation}

By the universal property of $FUA$, there exists a $1$-cell and invertible $2$-cell as below.

\begin{equation}\label{eq:seconddiagram}
\begin{tikzcd}
	UA && UFUA && UC
	\arrow["{\eta_{UA}}", from=1-1, to=1-3]
	\arrow[""{name=0, anchor=center, inner sep=0}, "h", curve={height=-30pt}, from=1-1, to=1-5]
	\arrow["{Uh'}", from=1-3, to=1-5]
	\arrow["\cong"{pos=0.7}, shift right=3, draw=none, from=0, to=1-3]
\end{tikzcd}
\end{equation}

Now consider the composite $2$-cell.  

\[\begin{tikzcd}
	&& UA \\
	UFUA & UC \\
	&& UFUA
	\arrow["{\eta_{UA}}", from=1-3, to=3-3]
	\arrow["{Uh'}", from=2-1, to=2-2]
	\arrow["Up", from=2-2, to=1-3]
	\arrow[""{name=0, anchor=center, inner sep=0}, "Uq"', from=2-2, to=3-3]
	\arrow["\lambda"{pos=0.4}, between={0.2}{0.7}, Rightarrow, from=0, to=1-3]
\end{tikzcd}\]

We claim that (1) $Up \circ Uh' \cong U\epsilon_{A}$ and (2) $Uq \circ Uh' \cong 1_{UFUA}$.  By the bicategorical universal property of $FUA$, this is equally to show that both sides become isomorphic under pre-composition by $\eta_{UA}$.

Now for (1) we have $$Up \circ Uh' \circ \eta_{UA} \cong Up \circ h \cong 1_{UA} \cong U\epsilon_{A} \circ \eta_{UA}$$ 
Here, the first isomorphism is by definition of $h'$; the second by the definition of $h$, and the third by the triangle isomorphism for the biadjunction.  Having proven (1), we turn to (2).  We have $$Uq \circ Uh' \circ \eta_{UA} \cong Uq \circ h \cong \eta_{UA}$$ again by definition of $h'$ and $h$.  Combining (1) and (2) with the above, we hence obtain a $2$-cell

\begin{equation}\label{eq:thirddiagram}
\begin{tikzcd}
	&& UA \\
	UFUA & UC \\
	&& UFUA
	\arrow["{\eta_{UA}}", from=1-3, to=3-3]
	\arrow[""{name=0, anchor=center, inner sep=0}, "{U\epsilon_{A}}", curve={height=-24pt}, from=2-1, to=1-3]
	\arrow["{Uh'}", from=2-1, to=2-2]
	\arrow[""{name=1, anchor=center, inner sep=0}, "1"', curve={height=24pt}, from=2-1, to=3-3]
	\arrow["Up", from=2-2, to=1-3]
	\arrow[""{name=2, anchor=center, inner sep=0}, "Uq"', from=2-2, to=3-3]
	\arrow["\cong", draw=none, from=0, to=2-2]
	\arrow["\cong"', draw=none, from=1, to=2-2]
	\arrow["\lambda"{pos=0.4}, between={0.2}{0.7}, Rightarrow, from=2, to=1-3]
\end{tikzcd}
\end{equation}

and it remains to prove that this $2$-cell becomes invertible both when post-composed by $U{\epsilon}_{A}$ and when pre-composed by $\eta_{UA}$.  Under pre-composition we obtain

\begin{equation}\label{eq:fourthdiagram}
\begin{tikzcd}
	&&& UA \\
	UA & UFUA & UC \\
	&&& UFUA
	\arrow["{\eta_{UA}}", from=1-4, to=3-4]
	\arrow["{\eta_{UA}}", from=2-1, to=2-2]
	\arrow[""{name=0, anchor=center, inner sep=0}, "{U\epsilon_{A}}", curve={height=-24pt}, from=2-2, to=1-4]
	\arrow["{Uh'}", from=2-2, to=2-3]
	\arrow[""{name=1, anchor=center, inner sep=0}, "1"', curve={height=24pt}, from=2-2, to=3-4]
	\arrow["Up", from=2-3, to=1-4]
	\arrow[""{name=2, anchor=center, inner sep=0}, "Uq"', from=2-3, to=3-4]
	\arrow["\cong", draw=none, from=0, to=2-3]
	\arrow["\cong"', draw=none, from=1, to=2-3]
	\arrow["\lambda"{pos=0.4}, between={0.2}{0.7}, Rightarrow, from=2, to=1-4]
\end{tikzcd}
\end{equation}

which will be invertible just when

\[\begin{tikzcd}
	&&& UA \\
	UA & UFUA & UC \\
	&&& UFUA
	\arrow["{\eta_{UA}}", from=1-4, to=3-4]
	\arrow["{\eta_{UA}}", from=2-1, to=2-2]
	\arrow["{Uh'}", from=2-2, to=2-3]
	\arrow["Up", from=2-3, to=1-4]
	\arrow[""{name=0, anchor=center, inner sep=0}, "Uq"', from=2-3, to=3-4]
	\arrow["\lambda"{pos=0.4}, between={0.2}{0.7}, Rightarrow, from=0, to=1-4]
\end{tikzcd}\]

is invertible.  By  \eqref{eq:seconddiagram}, this is equally to ask that
\[\begin{tikzcd}
	&&& UA \\
	UA && UC \\
	&&& UFUA
	\arrow["{\eta_{UA}}", from=1-4, to=3-4]
	\arrow["h", from=2-1, to=2-3]
	\arrow["Up", from=2-3, to=1-4]
	\arrow[""{name=0, anchor=center, inner sep=0}, "Uq"', from=2-3, to=3-4]
	\arrow["\lambda"{pos=0.4}, between={0.2}{0.7}, Rightarrow, from=0, to=1-4]
\end{tikzcd}\]
is invertible, or equivalently that 

\[\begin{tikzcd}
	&&& UA \\
	UA && UC \\
	&&& UFUA
	\arrow["{\eta_{UA}}", from=1-4, to=3-4]
	\arrow[""{name=0, anchor=center, inner sep=0}, "1", curve={height=-24pt}, from=2-1, to=1-4]
	\arrow["h", from=2-1, to=2-3]
	\arrow[""{name=1, anchor=center, inner sep=0}, "{\eta_{UA}}"', curve={height=24pt}, from=2-1, to=3-4]
	\arrow["Up", from=2-3, to=1-4]
	\arrow[""{name=2, anchor=center, inner sep=0}, "Uq"', from=2-3, to=3-4]
	\arrow["\cong", draw=none, from=0, to=2-3]
	\arrow["\cong"', draw=none, from=1, to=2-3]
	\arrow["\lambda"{pos=0.4}, between={0.2}{0.7}, Rightarrow, from=2, to=1-4]
\end{tikzcd}\]

 is invertible, where we paste by the invertible $2$-cells in Equation \eqref{eq:firstdiagram}.  By this equation, the above composite the identity $2$-cell, proving the claim.

We must also prove that

\[\begin{tikzcd}
	&& UA \\
	UFUA & UC \\
	&& UFUA && UA
	\arrow["{\eta_{UA}}", from=1-3, to=3-3]
	\arrow[""{name=0, anchor=center, inner sep=0}, "{U\epsilon_{A}}", curve={height=-24pt}, from=2-1, to=1-3]
	\arrow["{Uh'}", from=2-1, to=2-2]
	\arrow[""{name=1, anchor=center, inner sep=0}, "1"', curve={height=24pt}, from=2-1, to=3-3]
	\arrow["Up", from=2-2, to=1-3]
	\arrow[""{name=2, anchor=center, inner sep=0}, "Uq"', from=2-2, to=3-3]
	\arrow["{U\epsilon_{A}}"', from=3-3, to=3-5]
	\arrow["\cong", draw=none, from=0, to=2-2]
	\arrow["\cong"', draw=none, from=1, to=2-2]
	\arrow["\lambda"{pos=0.4}, between={0.2}{0.7}, Rightarrow, from=2, to=1-3]
\end{tikzcd}\]

is invertible or, equivalently, that

\[\begin{tikzcd}
	&& UA \\
	UFUA & UC \\
	&& UFUA && UA
	\arrow["{\eta_{UA}}", from=1-3, to=3-3]
	\arrow[""{name=0, anchor=center, inner sep=0}, "1", from=1-3, to=3-5]
	\arrow[""{name=1, anchor=center, inner sep=0}, "{U\epsilon_{A}}", curve={height=-24pt}, from=2-1, to=1-3]
	\arrow["{Uh'}", from=2-1, to=2-2]
	\arrow[""{name=2, anchor=center, inner sep=0}, "1"', curve={height=24pt}, from=2-1, to=3-3]
	\arrow["Up", from=2-2, to=1-3]
	\arrow[""{name=3, anchor=center, inner sep=0}, "Uq"', from=2-2, to=3-3]
	\arrow["{U\epsilon_{A}}"', from=3-3, to=3-5]
	\arrow["\cong", draw=none, from=1, to=2-2]
	\arrow["\cong"', draw=none, from=2, to=2-2]
	\arrow["\lambda"{pos=0.4}, between={0.2}{0.7}, Rightarrow, from=3, to=1-3]
	\arrow["\cong"{description, pos=0.6}, shift right, draw=none, from=3-3, to=0]
\end{tikzcd}\]

is invertible, where we paste by the invertible $2$-cell for the biadjunction.  But, by local fullness of $U$, this is of the form $U\gamma$ for some $\gamma \colon \epsilon_{A} \Rightarrow \epsilon_{A}$.  Therefore, to show that $\gamma$ is invertible (and in particular $U\gamma$) is equivalently to show that it is inverted by the hom-equivalence $$U- \circ \eta_{UA} \colon \ca(FUA,A) \to \cb(UA,UA)$$ of the biadjunction; in other words that $U\gamma$ becomes invertible under precomposition by $\eta_{UA}$.  We already proved this above, in showing that \eqref{eq:fourthdiagram} is invertible.

\end{proof}

\begin{remark}
In the previous result we did not use the 2-dimensional universal property of the colax bilimit, which could be omitted from the assumptions.
\end{remark}

\begin{remark}\label{rem:dual}[\emph{Dual and pseudo versions}]
Theorem \ref{thm:laxidempotent} has a dual, obtained by applying the duality on a $2$-category $\cc$ which reverses the direction of $2$-cells.  This interchanges lax and colax-idempotent pseudomonads and lax and colax bilimits of arrows.  

It also has a \emph{pseudo} version.  From Section 2.5 of \cite{Stepan}, a lax-idempotent biadjunction is pseudo-idempotent if the adjunction $U\epsilon \dashv \eta U$ is an adjoint equivalence whilst a lax-idempotent pseudomonad $T$ is pseudo-idempotent if the adjunction $\mu \dashv \eta T$ is actually an adjoint equivalence.  Moreover the \emph{bilimit} of an arrow is like the colax bilimit, except that the colax cone in its definition is a pseudo-cone --- that is, it has invertible $2$-cell $\lambda$ --- and its universal property holds only with respect to other pseudo-cones.  In the pseudo version of Theorem~\ref{thm:laxidempotent}, lax-idempotent biadjunctions and pseudomonads are replaced by pseudo-idempotent ones and colax bilimits by bilimits.  The proof of the corresponding result is a trivial adaptation. \end{remark}

\begin{examples} [\emph{Categories with structure}]
Let us now describe a few examples which illustrate how the colax bilimit property in Theorem \ref{thm:laxidempotent} is easily verified in practise.

Firstly, observe that in $\Cat$, the colax bilimit of a functor $f \colon A \to B$ is the comma category $B/f$ whose objects are triples $(b,\alpha \colon b \to fa,a)$.  The projections $p \colon B/f \to A$ and $q \colon B/f \to B$ are the evident functors sending $(b,\alpha \colon b \to fa,a)$ to $a$ and $b$ respectively, whilst the natural transformation $\lambda \colon q \to f \circ p$ sends the above triple to $\alpha$.
\begin{enumerate}
\item Let $\BCop$ be the $2$-category of categories admitting binary coproducts, binary coproduct preserving functors and natural transformations, and consider the evident forgetful $2$-functor $U \colon \BCop \to \Cat$.  If $A$ and $B$ as above have binary coproducts, we must show that $B/f$ has binary coproducts and the projections $p$ and $q$ preserve them --- in other words, that binary coproducts are \emph{pointwise} in $B/f$.  Indeed, given $(b,\alpha \colon b \to fa,a), (b',\alpha \colon b' \to fa',a') \in B/f$, it is an easy exercise to show that the coproduct is given by 
\[\begin{tikzcd}
	{(b+b', b+b'} & {fa +fa'} & {f(a+a'),a+a')}
	\arrow["{\alpha+\alpha'}", from=1-1, to=1-2]
	\arrow["can", from=1-2, to=1-3]
\end{tikzcd}\]
where $can$ is the canonical comparison between the coproducts.  Thus $U \colon \BCop \to \Cat$ satisfies the colax bilimit property and so, by Theorem~\ref{thm:laxidempotent}, induces a lax-idempotent pseudomonad.  

An essentially identical argument applies to forgetful $2$-functors $U \colon \PhiCol \to \Cat$ from $2$-categories of categories equipped with colimits of a given class $\Phi$ and the functors preserving them.
\item On the other hand, limits in $B/f$ are slightly different.  For instance, if $A$ and $B$ have binary products, the product of  $(b,\alpha \colon b \to fa,a)$ and $(b',\alpha \colon b' \to fa',a') \in B/f$ exists pointwise so long as $f$ preserves the product $a \times a'$, as below
\[\begin{tikzcd}
	{(b\times b', b\times b'} & {fa \times fa'} & {f(a\times a' ),a\times a')}
	\arrow["{\alpha\times \alpha'}", from=1-1, to=1-2]
	\arrow["can^{-1}", from=1-2, to=1-3]
\end{tikzcd}\]
since defining the structure map requires the canonical comparison $f(a\times a' ) \to fa \times fa'$ between products to be invertible.  This illustrates the fact that forgetful $2$-functors $U \colon \PhiLim \to \Cat$ from $2$-categories of categories equipped with limits of a given class $\Phi$ \emph{preserve} colax bilimits of arrows but do \emph{not} satisfy the colax bilimit property.
\item Let $\Lex$ denote the sub-$2$-category of $\Cat$ containing categories with finite limits and functors preserving them.  By the above, the inclusion $ \Lex \to \Cat$ is closed under colax bilimits of arrows.
Now let $\Reg$ denote the $2$-category of regular categories and regular functors and $U \colon \Reg \to \Lex$ the evident inclusion.  

In addition to finite limits, a regular category involves coequalisers of kernel pairs which are required to be pullback stable.  Now if $f \colon A \to B$ is a finite limit preserving functor between regular categories, then $B/f$ admits finite limits by (2) above and coequalisers of kernel pairs by (1) above, both of which are pointwise as in $\ca$ and $\cb$.  Since $p \colon B/f \to A$ and $q \colon B/f \to B$ are jointly conservative and preserve these limits and colimits, they also reflect them --- as such, the pullback stability property also holds in $B/f$, since it does so in $A$ and $B$.  In this way, we see that $U \colon \Reg \to \Lex$ satisfies the colax bilimit property.   Similar remarks apply to all of the usual doctrines such as Barr-exact, coherent categories and pretopoi.
\end{enumerate}

In the above examples, the forgetful $2$-functors in question may be shown to have left biadjoints in a number of ways, and are clearly locally full.  Therefore Theorem \ref{thm:laxidempotent} ensures that the induced pseudomonads are lax-idempotent.  We emphasise though, that these examples are already well known to be describable using lax-idempotent pseudomonads using direct descriptions of the cocompletion pseudomonad in question --- see \cite{Kock} and \cite{GL}.

An application where Theorem \ref{thm:laxidempotent} is needed will appear in our forthcoming paper with Jel\'{i}nek \cite{BJ}, where we will prove that the abstract type theories of Uemura \cite{Uemura} --- certain structured categories called \emph{representable map categories} --- are monadic for a colax-idempotent pseudomonad.  In that case, our proof will be heavily dependent on Theorem \ref{thm:laxidempotent} since we do not have a direct description of the pseudomonad in question.

\end{examples}

\begin{example}\label{classic}[\emph{Two-dimensional monad theory}]
Let us now explain how Theorem \ref{thm:laxidempotent} generalises a classical result from $2$-dimensional monad theory (see \cite{BKP, LS}).  

Firstly, recall that for $T$ a $2$-monad, one has the $2$-categories $\TAlg_s$, $\TAlg_p$ and $\TAlg_l$ of strict algebras, strict, pseudo and lax morphisms respectively.  There are identity-on-objects $2$-functors $\iota_p \colon \TAlg_s \hookrightarrow \TAlg_p$ and $\iota_l \colon \TAlg_s \hookrightarrow \TAlg_l$ and both have left $2$-adjoints if $\TAlg_s$ is sufficiently cocomplete (in particular, if $T$ is a $2$-monad with rank on a complete and cocomplete $2$-category --- see 3.13 of \cite{BKP}).  The left adjoints $Q_p$ and $Q_l$ are known as the pseudomorphism classifier and lax morphism classifier respectively.  

We write $\epsilon_A  \colon Q_p A \to A$ and $\eta_A \colon A \rightsquigarrow Q_p A$ for the counit and unit of the first $2$-adjunction, for which one triangle equation gives $\epsilon_A \circ \eta_A = 1$.  Lemma 4.2 of \cite{BKP} proves that there is an adjoint equivalence $\epsilon_A \dashv \eta_A$ with identity counit, which is equally to say that the $2$-adjunction is pseudo-idempotent.  The proof uses pseudolimits of arrows in $\TAlg_p$ and amounts to verifying that $\iota_p \colon \TAlg_s \hookrightarrow \TAlg_p$ satisfies the bilimit property, as in the pseudo-version in Remark \ref{rem:dual}.  Note that pseudo-idempotence in this context is important, because it implies the bi-cocompleteness of $\TAlg_p$ --- see Theorem 5.8 of \cite{BKP}.  Lemma 2.5 of \cite{LS} extended this argument to prove that the $2$-adjunction $Q_l \dashv \iota_l$ is lax-idempotent and their proof essentially amounts to verifying the colax bilimit property for $\iota_l \colon \TAlg_s \hookrightarrow \TAlg_l$.


Therefore Theorem \ref{thm:laxidempotent} and its pseudo-variant are generalisations of results from $2$-dimensional monad theory, which apply to general pseudofunctors as opposed to the particular $2$-functors $\iota_p \colon \TAlg_s \hookrightarrow \TAlg_p$ and $\iota_l \colon \TAlg_s \hookrightarrow \TAlg_l$, and which allow biadjoints, pseudomonads and bilimits rather than strict $2$-adjoints and $2$-limits.

\end{example}

\section{A characterisation of lax-idempotent pseudomonads}\label{sect:char}

The main result of this section, Theorem \ref{thm:char}, asserts that if $T$ is a pseudomonad on $\cc$ admitting colax bilimits of arrows, then $T$ is lax idempotent if and only if $U^T \colon \PTAlg_p \to \cc$ is locally full and satisfies the colax bilimit property.

The main task is to construct colax bilimits of arrows in $\PTAlg_p$ assuming they exist in $\cc$.  We could do this by adapting the techniques of Theorem 3.2 of \cite{Lack} for strict $2$-monads and their algebras but the proof would be complicated.  However, because we are dealing with lax-idempotent pseudomonads, there is a simpler argument.  It uses the \emph{Kan injectivity} approach to lax-idempotent pseudomonads developed in \cite{DLS}.

\subsection{Kan injectives and lax-idempotent pseudomonads}

Let $\mathcal J$ be a class of morphisms in a $2$-category $\cc$.     

\begin{itemize}
\item A (left) Kan injective is an object $X \in \cc$ such that for each $\alpha \colon A \to B \in \cc$, the functor $\cc(\alpha,X) \colon \cc(B,X) \to \cc(A,X)$ has a left adjoint $lan^X_\alpha$ with invertible unit.  (In other words, $X$ admits all left Kan extensions of morphisms $f \colon A \to X$ along $\alpha$ and the units of such extensions are invertible.)
\item A lax morphism of Kan injectives $f \colon X \to Y$ is simply a morphism in $\cc$.  It is a pseudomorphism (in \cite{DLS} a homomorphism) if it preserves left Kan extensions along morphisms in $\mathcal J$.  In terms of adjunctions, this says that the commutative square below left
\[\begin{tikzcd}
	{\cc(B,X)} && {\cc(B,Y)} && {\cc(B,X)} && {\cc(B,Y)} \\
	{\cc(A,X)} && {\cc(A,Y)} && {\cc(A,X)} && {\cc(A,Y)}
	\arrow[""{name=0, anchor=center, inner sep=0}, "{\cc(B,f)}", from=1-1, to=1-3]
	\arrow["{\cc(\alpha,X)}"', from=1-1, to=2-1]
	\arrow["{\cc(\alpha,Y)}", from=1-3, to=2-3]
	\arrow[""{name=1, anchor=center, inner sep=0}, "{\cc(B,f)}", from=1-5, to=1-7]
	\arrow[""{name=2, anchor=center, inner sep=0}, "{\cc(A,f)}"', from=2-1, to=2-3]
	\arrow["{lan^X_\alpha}", from=2-5, to=1-5]
	\arrow[""{name=3, anchor=center, inner sep=0}, "{\cc(A,f)}"', from=2-5, to=2-7]
	\arrow["{lan^Y_\alpha}"', from=2-7, to=1-7]
	\arrow["{=}", shift right=3, draw=none, from=0, to=2]
	\arrow[between={0.3}{0.7}, Rightarrow, from=3, to=1]
\end{tikzcd}\]
satisfies the Beck-Chevalley condition for each $\alpha \in \mathcal J$: that is to say, its mate (depicted above right) through the adjunctions $lan^X_\alpha \dashv \cc(\alpha,X)$ and $lan^Y_\alpha \dashv \cc(\alpha,Y)$ is invertible.
\end{itemize} 

We obtain a $2$-category $\Kan(\mathcal J)_l$ of Kan injectives, lax morphisms and all $2$-cells between them in $\cc$, and a locally full sub-$2$-category $\Kan(\mathcal J)_p \hookrightarrow \Kan(\mathcal J)_l$ containing the same objects but only the pseudomorphisms.

Given a lax-idempotent pseudomonad $T$ on $\cc$, let $\mathcal U = \{\eta_X \colon X \to TX : X \in \cc\}$.  This is the associated \emph{left Kan pseudomonad}, in the sense of \cite{MW}.  Indeed the forgetful $2$-functor $U^T \colon \PTAlg_p \to \cc$ takes its values in Kan injectives with respect to $\mathcal U$, and furthermore each Kan injective underlies a pseudo-algebra.  This is captured by the following result, which builds on the results of \cite{MW}.

\begin{theorem}[Theorem 2.7 of \cite{DLS}]\label{thm:semantics}
There is a $2$-equivalence $\PTAlg_p \cong \Kan(\mathcal U)_p$ commuting with the forgetful $2$-functors to the base.
\end{theorem}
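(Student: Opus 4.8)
The plan is to construct a $2$-functor $\Phi \colon \PTAlg_p \to \Kan(\mathcal U)_p$ that sends a pseudo-algebra $(X,a)$ to its underlying object $X$ and is the identity on underlying $1$-cells and $2$-cells of $\cc$, and then to show that $\Phi$ is surjective on objects and a local isomorphism of hom-categories. Since the two forgetful $2$-functors to $\cc$ are both given by forgetting to underlying data, commutation with them is then immediate. The foundation is the property-like nature of lax idempotence: by Corollary~\ref{cor:lali}, a pseudo-algebra structure amounts, up to coherent isomorphism, to a left adjoint $a \dashv \eta_X$ with invertible counit $a \circ \eta_X \cong 1_X$. For the object level I would invoke the Marmolejo--Wood theorem \cite{MW}, which identifies the lax-idempotent pseudomonad $T$ with the left Kan pseudomonad on $\mathcal U$ and its pseudo-algebras with the $\mathcal U$-Kan-injectives, the structure map of $X$ being recovered as $a = lan^{X}_{\eta_X}(1_X)$. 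This delivers both the object assignment of $\Phi$ and its surjectivity on objects.

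The real bookkeeping is at the level of $1$-cells, and here I would argue entirely through the mate calculus used to define a pseudomorphism of Kan-injectives. Given pseudo-algebras $(X,a)$, $(Y,b)$, the left Kan extension of any $g \colon W \to X$ along $\eta_W \in \mathcal U$ is computed as $a \circ Tg$, its unit being invertible by pseudonaturality of $\eta$ together with the unit law $a \circ \eta_X \cong 1_X$. Consequently every $1$-cell $f \colon X \to Y$ carries a canonical, a priori merely lax, comparison $\bar f \colon b \circ Tf \Rightarrow f \circ a$, obtained as the mate of the naturality square for $\alpha = \eta_X$ evaluated at $1_X \in \cc(X,X)$; this $\bar f$ is uniquely determined, so a pseudomorphism of algebras is precisely a $1$-cell equipped with invertible $\bar f$. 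The Beck--Chevalley condition for $\eta_X$ at $1_X$ is exactly the invertibility of $\bar f$, and for a general $g$, or a general $\eta_W \in \mathcal U$, the comparison is obtained from $\bar f$ by whiskering with $Tg$; hence invertibility of the single cell $\bar f$ forces the Beck--Chevalley condition for all of $\mathcal U$. This yields a bijection between pseudomorphisms on the two sides.

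For $2$-cells it suffices to check that $\Kan(\mathcal U)_p(X,Y)$ and $\PTAlg_p((X,a),(Y,b))$ have the same morphisms, i.e. that every $2$-cell $\theta \colon f \Rightarrow g$ of $\cc$ between pseudomorphisms is automatically an algebra $2$-cell. This is a naturality statement: the structure cells $\bar f$ and $\bar g$ arise from one and the same mate formula, so the compatibility square relating $\bar f$, $\bar g$, $\theta \circ a$ and $b \circ T\theta$ commutes automatically. Thus $\Phi$ is the identity on hom-categories, and together with surjectivity on objects this exhibits it as a $2$-equivalence commuting with the forgetful $2$-functors.

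I expect the genuine difficulty to sit at the object level: certifying that a $\mathcal U$-Kan-injective $Z$ really underlies a \emph{coherent} pseudo-algebra, so that the associativity and unit isomorphisms assembled from the universal properties of the left Kan extensions $lan^{Z}_{\eta_W}$ satisfy the pseudo-algebra axioms. This is precisely the content I am offloading onto \cite{MW}; were one to prove it from scratch, verifying these higher coherences would be the delicate step, whereas the $1$- and $2$-cell comparisons above are comparatively routine mate manipulations.
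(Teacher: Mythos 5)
The paper offers no proof of this statement: it is imported wholesale as Theorem~2.7 of \cite{DLS} (which in turn builds on \cite{MW}), so there is nothing internal to compare your argument against line by line. That said, your sketch is a faithful outline of how the cited proof actually runs, and it correctly locates the division of labour: the object-level claim --- that pseudo-$T$-algebras for a lax-idempotent $T$ are, up to equivalence, exactly the objects Kan-injective with respect to $\mathcal U = \{\eta_X\}$, with structure map recovered as $lan^X_{\eta_X}(1_X)$ --- is precisely the Marmolejo--Wood left Kan pseudomonad correspondence, and the coherence verification you flag as the delicate step is indeed the part one cannot reproduce by a two-line mate computation. Your treatment of $1$-cells (the unique lax structure cell $\bar f \colon b \circ Tf \Rightarrow f \circ a$ as a mate, with invertibility at the single generator $\eta_X$ propagating to the Beck--Chevalley condition for all of $\mathcal U$ by whiskering with $Tg$ and the pseudofunctoriality constraint of $T$) and of $2$-cells (automatic compatibility by functoriality of the mates correspondence, which is also why the paper's subsequent Remark can identify $\PTAlg_l$ with $\Kan(\mathcal U)_l$) matches the standard argument. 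The only points I would tighten are cosmetic: $\Phi$ is surjective but not injective on objects (distinct pseudo-algebra structures on the same $X$ collapse), so one should say ``local isomorphism on homs plus surjectivity on objects, hence a $2$-equivalence'' rather than suggest $\Phi$ is an isomorphism; and the phrase ``commutes automatically, being a naturality statement'' for the $2$-cell axiom deserves the one-line justification via compatibility of mates with vertical pasting. Neither affects correctness.
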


\begin{remark}
Although we shall not need it, let us also note that there is also a $2$-equivalence $\PTAlg_l \cong \Kan(\mathcal U)_l$ where on the left we have the $2$-category of pseudoalgebras and lax morphisms.  Indeed, this follows from the above result together with the fact (Theorem 2.5.9 of \cite{Stepan}) that the forgetful $2$-functor $\PTAlg_l \to \cc$ is fully faithful on $1$-cells and $2$-cells whilst the inclusion $\Kan(\mathcal U)_l \hookrightarrow \cc$ is so by definition. 
\end{remark}

We now investigate the lifting of various forms of bilimits to $2$-categories of Kan injectives.  We need to prove that $\Kan(\mathcal J)_l \hookrightarrow \cc$ is closed under any colax bilimits of arrows that $\cc$ has, and, in fact, it will be easier to prove this for general colax bilimits.  In fact this result is a slight modification of  Proposition 1.4 of \cite{DLS}, which shows that $\Kan(\mathcal J)_p \hookrightarrow \cc$ is closed under bilimits \cite{DLS}.  We also treat the dual results for bicolimits, since they may be of independent interest.

\begin{notation}
Before proving the proposition, let us fix some notation.
\begin{enumerate}
\item (\emph{Flavours of bilimit}) Let $W \colon \ca \to \Cat$ be a weight (a $2$-functor with small domain) and $D \colon \ca \to \cc$ a pseudofunctor.  A colax natural transformation $p \colon W \to \mathcal \cc(L,D-)$ exhibits $L$ as the \emph{colax bilimit} of $D$ weighted by $W$ if the induced functor $$\lambda_X \colon \cc(X,L) \to Colax(\ca,\Cat)(W,\cc(X,D-)) \colon f \mapsto \cc(f,D-) \circ p$$ is an equivalence of categories for each $X \in \cc$.   The \emph{bilimit} of $D$ is instead specified by a pseudonatural transformation $p \colon W \to \mathcal \cc(L,D-)$ for which the induced functor $\lambda_X \colon \cc(X,L) \to Ps(\ca,\Cat)(W,\cc(X,D-))$ is an equivalence for each $X$.  

In the case that the equivalence $\lambda_X$ is actually an isomorphism, we speak instead of the colax limit and pseudolimit.  In any of these cases, by the \emph{limit projections}, we mean the components $p_{i,x} \colon L \to Di$ for $i \in \ca$ and $x \in Wi$.  

Dually, there are colax bicolimits and bicolimits, colax colimits and pseudocolimits: for $W$ as above and $D \colon \ca^{op} \to \cc$ these instead involve colax and pseudo natural transformations $p \colon W \to \mathcal \cc(D-,C)$ respectively, and we now refer to the components $p_{i,x} \colon Di \to C$ for $i \in I$ and $x \in Wi$ as \emph{colimit inclusions}.  

\item (\emph{Ralis}) The term \emph{rali} refers to the right adjoint $u$ of an adjunction $f \dashv u$ with invertible unit.  In particular, $X$ is Kan injective just when each $\cc(\alpha,X) \colon \cc(B,X) \to \cc(A,X)$, as above, is a rali.
\end{enumerate}
 \end{notation}

\begin{proposition}\label{prop:kaninjective}
Let $\mathcal J$ be a class of morphisms in a $2$-category $\cc$.  Then 
\begin{enumerate}
\item the full sub-2-category $\Kan(\mathcal J)_l \hookrightarrow \cc$ is closed under any colax bilimits that $\cc$ has.  Furthermore the limit projections are pseudomorphisms of Kan injectives, and jointly detect the property of being pseudo;
\item it is also closed under colax bicolimits that $\cc$ has and which are preserved by homming out of the objects which are the domains and codomains of morphisms in $\mathcal J$.  Furthermore, the colimit inclusions are pseudomorphisms of Kan injectives, and jointly detect the property of being pseudo;
\item the sub-2-category $\Kan(\mathcal J)_p \hookrightarrow \cc$ is closed under any bilimits that $\cc$ has;
\item it is also closed under any bicolimits that $\cc$ has and which are preserved by homming out of the objects which are the domains and codomains of morphisms in $J$.
\end{enumerate}
\end{proposition}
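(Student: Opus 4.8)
The plan is to reduce all four parts to a single closure property of ralis under weighted (co)limits in \Cat, verified by building adjoints componentwise, and then to read off the claims about projections, inclusions and pseudomorphisms from that componentwise construction. For (1), let $L$ be the colax bilimit in $\cc$ of a diagram $D\colon\ca\to\cc$ valued in Kan injectives, weighted by $W$, and fix $\alpha\colon A\to B\in\mathcal J$; I must show $\cc(\alpha,L)\colon\cc(B,L)\to\cc(A,L)$ is a rali. Homming into $L$ and using the universal property gives equivalences $\cc(X,L)\simeq Colax(\ca,\Cat)(W,\cc(X,D-))$, $2$-natural in $X$, under which $\cc(\alpha,L)$ is identified with post-composition $\sigma_\ast$ by $\sigma:=\cc(\alpha,D-)\colon\cc(B,D-)\Rightarrow\cc(A,D-)$. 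Since $D$ is a pseudofunctor and $\cc(\alpha,-)$ is $2$-natural, $\sigma$ is pseudonatural, and each component $\sigma_i=\cc(\alpha,Di)$ is a rali because $Di$ is Kan injective. Parts (2) and (4) are handled the same way, the hypothesis that the bicolimit be preserved by homming out of the domains and codomains of $\mathcal J$-maps being exactly what presents $\cc(A,C)$ and $\cc(B,C)$ as the corresponding colax bicolimits in $\Cat$; thereafter the argument is formally dual.

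Everything then rests on the claim that if $\sigma\colon E\Rightarrow E'$ is a pseudonatural transformation of pseudofunctors $\ca\to\Cat$ whose components $\sigma_i$ are ralis, then $\sigma_\ast\colon Colax(\ca,\Cat)(W,E)\to Colax(\ca,\Cat)(W,E')$ is again a rali. Choosing left adjoints $f_i\dashv\sigma_i$ with invertible units, I would assemble the $f_i$ into a family $f\colon E'\Rightarrow E$ whose structure $2$-cells $f_\phi$ are the mates of the invertible constraints $\sigma_\phi$ through the adjunctions $f_i\dashv\sigma_i$. These mates need not be invertible, so $f$ is a priori only colax; but colax transformations compose, so post-composition $f_\ast$ with $f$ is defined, and transposing modifications componentwise through $f_i\dashv\sigma_i$ yields $f_\ast\dashv\sigma_\ast$. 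The unit of this adjunction at $t$ has components the invertible units of $f_i\dashv\sigma_i$ at the $t_i(x)$, and a modification is invertible exactly when it is so componentwise, so the unit is invertible and $\sigma_\ast$ is a rali. This shows $L$ is Kan injective, establishing (1); the colimit version is its formal dual.

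Because $f_\ast$ is built componentwise, the projection $p_{j,x}\colon L\to Dj$, which corresponds under the equivalence to evaluation $\ev_{j,x}$, satisfies $\ev_{j,x}\circ f_\ast\cong f_j\circ\ev_{j,x}=lan^{Dj}_\alpha\circ\ev_{j,x}$; this canonical isomorphism is precisely the Beck--Chevalley mate for $p_{j,x}$, so each projection is a pseudomorphism. For joint detection, a $2$-cell between maps into $L'$ is a modification and hence invertible iff its whiskerings by the projections $p'_{j,x}$ are; applied to the Beck--Chevalley mate of a map $g\colon L\to L'$ this shows $g$ is pseudo iff every $p'_{j,x}\circ g$ is. Crucially, the same computation disposes of (3) and (4): the structure cell $f_\phi$ is invertible precisely when $D\phi$ is a pseudomorphism of Kan injectives, so for a diagram $D$ valued in $\Kan(\mathcal J)_p$ the transformation $f$ is genuinely pseudonatural, $f_\ast$ preserves pseudonatural transformations, and $f_\ast\dashv\sigma_\ast$ restricts to $Ps(\ca,\Cat)(W,-)$; the limit cone then lands in $\Kan(\mathcal J)_p$ and its universal property there is supplied by joint detection.

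The heart of the argument, and the step I expect to be most delicate, is the rali-gluing claim above: identifying the cells $f_\phi$ through the mate calculus, verifying the adjunction $f_\ast\dashv\sigma_\ast$ on categories of (co)lax transformations together with invertibility of its unit, and keeping the colax/lax/pseudo bookkeeping straight so that precisely the distinction between lax morphisms in (1) and (2) and pseudomorphisms in (3) and (4) emerges from invertibility of the $f_\phi$.
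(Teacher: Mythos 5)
Your proposal is correct and follows essentially the same route as the paper: reduce via the equivalences $\cc(X,L)\simeq Colax(\ca,\Cat)(W,\cc(X,D-))$ to showing that $\cc(\alpha,D-)$ is a rali in $Colax(\ca,\Cat)$, assemble the componentwise left adjoints into a colax transformation by the mate calculus (doctrinal adjunction) with componentwise invertible unit, and then treat the projections, joint detection, and the pseudo cases (3)--(4) by observing that evaluation commutes strictly with the componentwise adjoints and that the mates $f_\phi$ are invertible exactly when the $D\phi$ are pseudomorphisms. The only cosmetic difference is that you verify the adjunction $f_\ast\dashv\sigma_\ast$ by hand where the paper simply applies the $2$-functor $Colax(\ca,\Cat)(W,-)$, which preserves adjunctions.
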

\begin{proof}
\begin{enumerate}
\item
Consider a weight $W: \ca \to \Cat$ and pseudofunctor $D \colon \ca \to \cc$ such that each $Di$ is Kan injective.  We form its colax bilimit $L \in \cc$, which comes equipped with a colax natural transformation $p \colon W \to \mathcal \cc(L,D-)$.  Now, we must prove that for $\alpha \colon A \to B \in \ca$, each $\cc(\alpha,L) \colon \cc(B,L) \to \cc(A,L)$ is a rali.  

Consider the commutative square below.
\[\begin{tikzcd}
	{\cc(B,L)} &&&& {\cc(A,L)} \\
	{Colax(\ca,\Cat)(W,\cc(B,D-))} &&&& {Colax(\ca,\Cat)(W,\cc(A,D-))}
	\arrow["{\cc(\alpha,L)}", from=1-1, to=1-5]
	\arrow["{\lambda_B}"', from=1-1, to=2-1]
	\arrow["{\lambda_A}", from=1-5, to=2-5]
	\arrow["{Colax(\ca,\Cat)(W,\cc(\alpha,D-))}"', from=2-1, to=2-5]
\end{tikzcd}\]
Since the vertical maps are equivalences, the upper horizontal will be a rali just when the lower horizontal is one.  Now since $Colax(\ca,\Cat)(W,-)$, like any $2$-functor, preserves ralis, it suffices to show that $\cc(\alpha,D-) \colon \cc(B,D-) \to \cc(A,D-)$ is a rali in $Colax(\ca,\Cat)$.  Its components are ralis by assumption; therefore, by doctrinal adjunction, the left adjoint components combine to a colax natural transformation $lan^D_\alpha \colon \cc(A,D-) \to \cc(B,D-)$ forming an adjunction $lan^D_\alpha \dashv \cc(\alpha,D-)$ in  $Colax(\ca,\Cat)$, whose unit and counit modifications are componentwise the units and counits of the assumed adjunctions.  In particular, the unit modification is invertible, since each of its components is, and we have a rali as required.

Next, we must prove that each of the limit projections $p_{i,x} \colon L \to Di$ for $x \in Wi$ is a pseudomorphism of Kan injectives --- in other words, that the mate of the outer square below is invertible.
\[\begin{tikzcd}
	{\cc(B,L)} &&&&& {\cc(A,L)} \\
	& {(W,\cc(B,D-))} &&& {(W,\cc(A,D-))} \\
	{\cc(B,Di)} &&&&& {\cc(A,Di)}
	\arrow["{\cc(\alpha,L)}", from=1-1, to=1-6]
	\arrow["{\lambda_B}"', from=1-1, to=2-2]
	\arrow["{\cc(B,p_{i,x})}"{description}, from=1-1, to=3-1]
	\arrow["{\lambda_A}", from=1-6, to=2-5]
	\arrow["{\cc(A,p_{i,x})}"{description}, from=1-6, to=3-6]
	\arrow["{(W,\cc(\alpha,D-))}"', from=2-2, to=2-5]
	\arrow["{ev_{i,x}}"', from=2-2, to=3-1]
	\arrow["{ev_{i,x}}", from=2-5, to=3-6]
	\arrow["{\cc(\alpha,Di)}"', from=3-1, to=3-6]
\end{tikzcd}\]
In the inner components we have abbreviated $Colax(\ca,\Cat)(W,\cc(B,D-))$ by $(W,\cc(B,D-))$ and so on to save space.  Now  the evaluation functor $ev_{i,x}$  sends $q \colon W \to \cc(X,D-))$ to its component $q_{i,x} \colon X \to Di$, where $x \in Wi$ and $i \in \ca$.  In particular, the outer triangles then commute whilst the lower triangle commutes by naturality of evaluation.  Therefore, by functoriality of the mates correspondence (Proposition 2.2 of \cite{Review}), it suffices to prove that the upper and lower quadrilateral have invertible mates.  

The upper quadrilateral does so since $\lambda_B$ and $\lambda_A$ are equivalences.  With regards the lower quadrilateral, observe that by the componentwise construction of the left adjoint to $(W,\cc(\alpha,D-))$, the evaluation functors $ev_{i,x}$ commute with the left adjoints, counit and unit components on the nose --- as such, the mate of the lower quadrilateral, is the identity.  Therefore each $p_{i,x}$ is a morphism of Kan injectives.

For the final part, observe that if $f_i$ are pseudomorphisms of Kan injectives which are jointly conservative, then each composite $ f_i \circ g$ is pseudo just when $g$ is pseudo: indeed, the pseudo property is about $2$-cells being invertible, which is reflected by conservative morphisms.   Accordingly, for the final part of the statement it suffices to show that the functors $\cc(X,p_{i,x})$ are jointly conservative.   As explained above, $\cc(X,p_{i,x})$ is the composite of an equivalence followed by $ev_{i,x}$, and these evaluation functors are jointly conservative since a modification is invertible just when its components are. 

\item For $W$ as before, consider a pseudofunctor $D \colon \ca^{op} \to \cc$ such that each $Di$ is Kan injective.  We suppose that its colax bicolimit $C$ exists with weighted cocone $p \colon W \to \cc(D-,C) \in Colax(\ca,\Cat)$.  For $\alpha \colon A \to B \in \mathcal J$ as before, we must show that $\cc(\alpha,C)$ is a rali.  

Now since $\Cat$ has colax colimits, by their universal property these assemble into a $2$-functor $col^c_W \colon Colax(\ca^{op},\Cat) \to \Cat$.  Now consider the diagram below
\[\begin{tikzcd}
	{col^c_W(\cc(B,D-))} &&& {col^c_W(\cc(A,D-))} \\
	{\cc(B,C)} &&& {\cc(A,C)}
	\arrow[""{name=0, anchor=center, inner sep=0}, "{col^c_W(\cc(\alpha,D-))}", from=1-1, to=1-4]
	\arrow["{can_B}"', from=1-1, to=2-1]
	\arrow["{can_A}", from=1-4, to=2-4]
	\arrow[""{name=1, anchor=center, inner sep=0}, "{\cc(\alpha,C)}"', from=2-1, to=2-4]
	\arrow[shift left=2, draw=none, from=1, to=0]
\end{tikzcd}\]
where the vertical morphisms are the canonical comparisons from the colax colimits, which make the square commute on the nose.  Since $\cc(B,-)$ and $\cc(A,-)$ preserve the colax bicolimit $C$ in $\cc$, these vertical maps are equivalences.  Therefore, we must equally prove that the upper horizontal is a rali.   Just as in the first part of the proposition, $\cc(\alpha,D-)$ is a rali in $Colax(\ca^{op},\Cat)$, and so sent to a rali by $col^c_W$, as required.

Next we show that the colimit inclusions $p_{i,x} \colon Di \to C$ are pseudomorphisms of Kan injectives.  To this end, consider the following diagram
\[\begin{tikzcd}
	{\cc(B,Di)} &&&& {\cc(A,Di)} \\
	& {col_W^c(\cc(B,D-))} && {col_W^c(\cc(A,D-))} \\
	{\cc(B,C)} &&&& {\cc(A,C)}
	\arrow[""{name=0, anchor=center, inner sep=0}, "{\cc(\alpha,Di)}", from=1-1, to=1-5]
	\arrow["{q_{i,x}^B}", from=1-1, to=2-2]
	\arrow["{\cc(B,p_{i,x})}"{description}, from=1-1, to=3-1]
	\arrow["{q_{i,x}^A}"', from=1-5, to=2-4]
	\arrow["{\cc(A,p_{i,x})}"{description}, from=1-5, to=3-5]
	\arrow[""{name=1, anchor=center, inner sep=0}, "{col^c_W\cc(\alpha,D-)}", from=2-2, to=2-4]
	\arrow["{can_B}"{pos=0.2}, from=2-2, to=3-1]
	\arrow["{can_A}"'{pos=0.2}, from=2-4, to=3-5]
	\arrow[""{name=2, anchor=center, inner sep=0}, "{\cc(\alpha,C)}", from=3-5, to=3-1]
	\arrow[shift left=2, draw=none, from=0, to=1]
	\arrow[shift left, draw=none, from=1, to=2]
\end{tikzcd}\]
where the colimit inclusions in $\Cat$ are denoted by $q^B_{i,x}$ and $q^A_{i,x}$.  Then all regions of the diagram commute.  Arguing as in Part 1, it suffices to show that both quadrilaterals have invertible mates.  The lower one does since $can_A$ and $can_B$ are equivalences; the upper one has identity mate since the colimit inclusions commute strictly with the left adjoints, units and counits of the ralis.  Hence each $p_{i,x}$ is a morphism of Kan injectives.

For the last part of the statement, observe that if $f_i$ are pseudomorphisms of Kan injectives which are jointly liberal in $\cc$ (meaning that the functors $\cc(f_i,X)$ are jointly conservative for each $X$) then each composite $ g  \circ f_i$ is pseudo just when $g$ is pseudo.   Accordingly, it suffices to show that the functors $\cc(p_{i,x},X) \colon \cc(C,X) \to \cc(Di,X)$ are jointly conservative.   But $\cc(p_{i,x},X)$ is equally the composite of the equivalence $\cc(C,X) \to Colax(\ca,\Cat)(W,\cc(D-,X))$ given by composition by $p$ and the evaluation functors $ev_{i,x} \colon Colax(\ca,\Cat)(W,\cc(D-,X)) \to \cc(Di,X)$ which are jointly conservative as before.  As such, the $\cc(p_{i,x},X)$ are jointly conservative, as required.

\item This is the content of Proposition 1.4 of \cite{DLS}.  It is essentially identical in form to Part 1 above, except that one replaces $Colax(\ca,\Cat)$ by $Ps(\ca,\Cat)$ and uses that the right adjoint in $Colax(\ca,\Cat)$ to $\cc(\alpha,D-) \colon \cc(B,D-) \to \cc(A,D-)$ obtained in Part 1 actually belongs to  $Ps(\ca,\Cat)$ --- here, one uses that each $Df \colon Di \to Dj$ is a pseudomorphism of Kan injectives to ensure that the right adjoint components are pseudonatural.
\item As in Part 2 with a similar modification as in Part 3.
\end{enumerate}
\end{proof}

Using this, we now establish the result promised at the beginning of the section.

\begin{theorem}\label{thm:char}
Let $T$ be a pseudomonad on a $2$-category $\cc$  and suppose that $\cc$ admits colax bilimits of arrows.  Then $T$ is lax-idempotent if and only if $U^T$ is locally full and satisfies the colax bilimit property.
\end{theorem}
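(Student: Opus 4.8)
\section*{Proof proposal}

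The plan is to prove the two implications separately, drawing on Theorem~\ref{thm:laxidempotent} for the ``if'' direction and on the Kan-injectivity machinery of Theorem~\ref{thm:semantics} and Proposition~\ref{prop:kaninjective} for the ``only if'' direction.

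For the ``if'' direction, suppose $U^T$ is locally full and satisfies the colax bilimit property. Since $U^T \colon \PTAlg_p \to \cc$ carries a left biadjoint $F^T$ with $T = U^T \circ F^T$, I would simply apply Theorem~\ref{thm:laxidempotent} to the biadjunction $F^T \dashv U^T$: its hypotheses are exactly local fullness of $U^T$ together with the colax bilimit property, and its conclusion is that the induced pseudomonad $T = U^T F^T$ is lax-idempotent. So this direction is immediate.

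For the ``only if'' direction, assume $T$ is lax-idempotent and set $\mathcal U = \{\eta_X \colon X \to TX : X \in \cc\}$. By Theorem~\ref{thm:semantics} there is a $2$-equivalence $\PTAlg_p \cong \Kan(\mathcal U)_p$ commuting with the forgetful $2$-functors to $\cc$, and I would use this to transport both assertions about $U^T$ into statements about the inclusion $\Kan(\mathcal U)_p \hookrightarrow \cc$. Local fullness is then immediate, since $\Kan(\mathcal U)_p$ is by construction a locally full sub-$2$-category of $\Kan(\mathcal U)_l$, and the latter contains all the $2$-cells of $\cc$. For the colax bilimit property, I would take any morphism $f \colon U^T M \to U^T N$ in $\cc$ and form its colax bilimit in $\cc$, which exists by hypothesis. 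Recalling that the colax bilimit of an arrow is a particular weighted colax bilimit, whose diagram takes values in the two objects $U^T M$ and $U^T N$ --- both Kan injective, being in the image of $U^T$ --- the first part of Proposition~\ref{prop:kaninjective} applies: the colax bilimit object is again Kan injective, and its two limit projections $p$ and $q$ are pseudo morphisms of Kan injectives. Transporting back along the $2$-equivalence, the projections lift to pseudomorphisms in $\PTAlg_p$, so the colax bilimit in $\cc$ acquires legs of the required form $U^T p, U^T q$. This is precisely the colax bilimit property for $U^T$.

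The point deserving the most care is the choice of which part of Proposition~\ref{prop:kaninjective} to invoke. The crucial observation is that the morphism $f \colon U^T M \to U^T N$ whose colax bilimit we lift is an \emph{arbitrary} morphism of $\cc$ --- equivalently a merely \emph{lax} morphism of Kan injectives --- and need not be an algebra pseudomorphism. It is exactly because the first part of Proposition~\ref{prop:kaninjective} allows the diagram to consist of arbitrary morphisms between Kan injectives (in contrast to its third part, which requires the connecting maps to be pseudo) that we may lift the comma object of $f$ while asking only the \emph{projections}, rather than $f$ itself, to be pseudo. This is the structural reason the result is phrased in terms of the colax bilimit property and lax-idempotence, rather than the bilimit property and pseudo-idempotence as in Remark~\ref{rem:dual}. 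A further routine point is that the colax bilimit property, as formulated in Theorem~\ref{thm:laxidempotent}, asks only for a colax bilimit in $\cc$ whose legs factor through $U^T$, and not for a colax bilimit computed in $\PTAlg_p$; thus the lifting supplied by Proposition~\ref{prop:kaninjective} already suffices, and no universal property need be verified upstairs.
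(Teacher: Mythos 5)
Your proposal is correct and follows essentially the same route as the paper: the ``if'' direction by applying Theorem~\ref{thm:laxidempotent} to $F^T \dashv U^T$, and the ``only if'' direction by transporting along the $2$-equivalence $\PTAlg_p \simeq \Kan(\mathcal U)_p$ of Theorem~\ref{thm:semantics} and invoking Proposition~\ref{prop:kaninjective} Part~1 for the colax bilimit of an arrow. Your added remark about why Part~1 (rather than Part~3) is the right tool --- namely that the arrow $f$ need only be a morphism of $\cc$, not a pseudomorphism --- is accurate and consistent with the paper's reasoning.
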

\begin{proof}
The \emph{if} direction follows from Theorem \ref{thm:laxidempotent} by using the biadjunction $F^T \dashv U^T$.  Conversely, if $T$ is lax-idempotent, we have the $2$-equivalence $ \PTAlg_p \simeq \Kan(\mathcal U)_p$ of Theorem~\ref{thm:semantics} commuting with the forgetful $2$-functors to $\cc$.  As such, it suffices therefore to check that the inclusion $U \colon \Kan(\mathcal U)_p \to \cc$ is locally full and satisfies the colax bilimit property.  It is locally full by definition of $2$-cells in $\Kan(\mathcal U)_p$.  That it satisfies the colax bilimit property is the content of Lemma \ref{prop:kaninjective} Part 1, applied to the special case of the colax bilimit of an arrow.
\end{proof}

\begin{remark} As before, Theorem~\ref{thm:char} admits a pseudo-version, in which lax-idempotent pseudomonads are replaced by pseudo-idempotent ones and colax bilimits are replaced by bilimits.  \end{remark}

\subsection{Monadicity}

In Theorem 4.11 of \cite{CMV}, the authors established a monadicity theorem characterising when a forgetful $2$-functor $U \colon \ca \to \cb$ is monadic for a lax-idempotent pseudomonad.  One feature of this result is that it assumes the induced pseudomonad is lax-idempotent rather than providing a criterion for recognising this in terms of $U$.  It is this point which we will address in Theorem \ref{thm:monadicity2} below, building on our earlier results.

%
%
%

A second feature of Theorem 4.11 of \cite{CMV} is that it assumes the biadjunction $F \dashv U$ involves $2$-functors rather than pseudofunctors, though it is noted in the introduction to \cite{CMV} that their results can be extended to handle pseudofunctors at the cost of the diagrams becoming more complicated.  Since we want a version of Theorem 4.11 allowing for pseudofunctors, we start by making this adaptation in Theorem \ref{thm:monadicity1} below, using a strictification approach.

In order to explain the monadicity theorem of \emph{loc.cit.}, let us firstly explain the terms involved.  Given a biadjunction $F \dashv U$, we have the canonical comparison $E$
\[\begin{tikzcd}
	\ca && {\PTAlg_p} \\
	& \cb
	\arrow["E", from=1-1, to=1-3]
	\arrow["U"', from=1-1, to=2-2]
	\arrow["{U^T}", from=1-3, to=2-2]
\end{tikzcd}\]
to the $2$-category of pseudoalgebras for the induced pseudomonad $T = UF$.  Then $U$ is said to be monadic if $U$ is a biequivalence of $2$-categories.  

In the case that the biadjunction is lax-idempotent, so that $T$ is too, we have the $2$-equivalence $\PTAlg_p \simeq \Kan(\mathcal U)_p$ of Theorem \ref{thm:semantics} commuting with the forgetful $2$-functors to the base.  As such, the canonical comparison $E$ is a biequivalence just when the composite $E^\star$ depicted below is a biequivalence.
\begin{equation}\label{eq:comparison2}
\begin{tikzcd}
	\ca && {\PTAlg_p} && {\Kan(\mathcal U)_p} \\
	&& \cb
	\arrow["E", from=1-1, to=1-3]
	\arrow["{E^\star}", curve={height=-24pt}, from=1-1, to=1-5]
	\arrow["U"', from=1-1, to=2-3]
	\arrow["\simeq", from=1-3, to=1-5]
	\arrow["{U^T}", from=1-3, to=2-3]
	\arrow[hook, from=1-5, to=2-3]
\end{tikzcd}
\end{equation}

%
%

Given a $2$-cell $\alpha$ in $\ca$
\[\begin{tikzcd}
	X && Y && UX && UY & Z & {}
	\arrow[""{name=0, anchor=center, inner sep=0}, "f", curve={height=-18pt}, from=1-1, to=1-3]
	\arrow[""{name=1, anchor=center, inner sep=0}, "g"', curve={height=18pt}, from=1-1, to=1-3]
	\arrow[""{name=2, anchor=center, inner sep=0}, "Uf", curve={height=-18pt}, from=1-5, to=1-7]
	\arrow[""{name=3, anchor=center, inner sep=0}, "Ug"', curve={height=18pt}, from=1-5, to=1-7]
	\arrow["h", from=1-7, to=1-8]
	\arrow["\alpha", between={0.2}{0.8}, Rightarrow, from=0, to=1]
	\arrow["{U\alpha}", between={0.2}{0.8}, Rightarrow, from=2, to=3]
\end{tikzcd}\]

 it is said to have a $U$-absolute bi-coinverter, if its image under $U$ has a bi-coinverter, as depicted above right, which is preserved by every pseudofunctor.\begin{footnote}{Note that \cite{CMV} uses the term pseudo-coinverter for what we here call bi-coinverter.  Since this conflicts with the standard terminology for pseudocolimits, which involve up-to-iso universal properties, we prefer the bicolimit terminology.}\end{footnote} The condition then asks that $\ca$ has a bi-coinverter of those $2$-cells whose image under $U$ has an absolute bi-coinverter, and that $U$ preserves such bi-coinverters.

\begin{theorem}\label{thm:monadicity1}[Proposition 4.7 and Theorem 4.11 of \cite{CMV}]
Let $U \colon \ca \to \cb$ be a pseudofunctor having a left biadjoint $F$.  Then $U$ is monadic for $T=UF$ a lax-idempotent pseudomonad if and only if:
\begin{itemize}
\item The biadjunction is lax-idempotent,
\item $U$ reflects adjoint equivalences and
\item $\ca$ has and $U$ preserves bi-coinverters of $U$-absolute bi-coinverters.
\end{itemize}
\end{theorem}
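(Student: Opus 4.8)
The plan is to deduce this pseudofunctor version from the strict version, Theorem 4.11 of \cite{CMV}, by a strictification argument. The guiding observation is that every ingredient in the statement --- the monadicity conclusion, lax-idempotency of the biadjunction, reflection of adjoint equivalences, and the existence and preservation of bi-coinverters of $U$-absolute bi-coinverters --- is a purely bicategorical notion, and hence invariant under replacing $F \dashv U$ by a biequivalent biadjunction between $2$-functors.

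First I would invoke the strictification of bicategories and pseudofunctors (see \cite{Gray}) to replace $F \dashv U$ by a biequivalent biadjunction $F' \dashv U' \colon \ca' \to \cb'$ in which both $F'$ and $U'$ are $2$-functors. Concretely, strictification supplies $2$-categories $\ca',\cb'$, biequivalences $P \colon \ca \to \ca'$ and $Q \colon \cb \to \cb'$, and $2$-functors $U',F'$ equipped with pseudonatural equivalences $U'P \simeq QU$ and $F'Q \simeq PF$. Since biadjunctions, being adjunctions in the ambient tricategory, are transported by biequivalences, the pair $F' \dashv U'$ is again a biadjunction; as $U'$ is now a $2$-functor with a left biadjoint, Theorem 4.11 of \cite{CMV} applies to it directly.

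Next I would verify that $P$ and $Q$ match up all four conditions between the two settings, in both directions. Reflection of adjoint equivalences is immediate, since adjoint equivalences are bicategorical and $P,Q$ are biequivalences. For lax-idempotency, I would note that the induced pseudomonad $T' = U'F'$ corresponds to $T = UF$ under $Q$, and that lax-idempotency of a pseudomonad --- the existence of an adjunction $\mu \dashv \eta_T$ in $Ps(\cc,\cc)$ --- transports along this correspondence; alternatively one transports lax-idempotency of the biadjunction itself, via Proposition 2.5.12 of \cite{Stepan}. The bi-coinverter clause transports because bi-coinverters are bicolimits, preserved and reflected by the biequivalences $P,Q$, while $U$-absoluteness (preservation by every pseudofunctor) is stable under biequivalence; hence $\ca$ has and $U$ preserves the relevant bi-coinverters exactly when $\ca'$ has and $U'$ preserves them. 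Finally, the comparison $E \colon \ca \to \PTAlg_p(T)$ corresponds, via $P$ and the induced biequivalence $\PTAlg_p(T) \simeq \PTAlg_p(T')$ over the base, to the comparison $E' \colon \ca' \to \PTAlg_p(T')$, so $U$ is monadic precisely when $U'$ is. Combining these correspondences with the strict theorem applied to $U'$ yields the equivalence for $U$.

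The main obstacle I anticipate is not any individual transport --- each is routine once phrased bicategorically --- but rather organising the strictification coherently enough that the correspondence of induced pseudomonads, and of the comparison $2$-functors $E$ and $E'$ into their respective pseudoalgebra $2$-categories, is genuinely canonical rather than merely plausible. In particular, producing the biequivalence $\PTAlg_p(T) \simeq \PTAlg_p(T')$ compatibly with the forgetful $2$-functors and with $E,E'$ is the delicate point, since monadicity is \emph{defined} through these comparisons; this is where the bookkeeping of the strictification must be made precise so that the monadicity conclusion really does transfer.
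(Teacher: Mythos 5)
Your overall strategy --- strictify, transport the four conditions, apply Theorem 4.11 of \cite{CMV} to the strict replacement --- is exactly the paper's, and your transport of lax-idempotency, reflection of adjoint equivalences and the bi-coinverter clause is fine. But the step you yourself flag as ``the delicate point'' is a genuine gap, and it is precisely the step where the paper has to do real work. You never produce the biequivalence $\PTAlg_p(T) \simeq \PTAlg_p(T')$ over the base, compatibly with the comparisons $E$ and $E'$; asserting that it ``corresponds'' is not a proof, and with a generic strictification (squares commuting only up to pseudonatural equivalence $U'P \simeq QU$, $F'Q \simeq PF$) even identifying the induced pseudomonads $T$ and $T'$, let alone their pseudoalgebra $2$-categories, requires nontrivial coherence bookkeeping that you have not supplied.

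The paper closes this gap with two specific devices. First, instead of a generic strictification it uses the $2$-monadic one: the left $2$-adjoint $(-)'$ to the inclusion $\twocat_s \to \twocat_p$ of \cite{Icons}, whose unit $p_\ca \colon \ca \to \ca'$ can be taken identity-on-objects and locally an equivalence, so that the square $U'p_\ca = p_\cb U$ commutes \emph{strictly}; the biadjunction $F' \dashv U'$ is then verified directly by checking that $U'(-) \circ \eta'_X$ is an equivalence of hom-categories, with $\eta'_X = p_\cb(\eta_X)$. Second --- and this is the key idea you are missing --- because the pseudomonad is lax-idempotent, Theorem \ref{thm:semantics} replaces $\PTAlg_p$ by the $2$-category $\Kan(\mathcal U)_p$ of Kan injectives, which is defined purely by conditions on hom-categories. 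Since $p_\cb$ is identity-on-objects and locally an equivalence, an object is Kan injective for $\mathcal U$ iff it is for $\mathcal U'$, and one gets an identity-on-objects, locally-an-equivalence comparison $K \colon \Kan(\mathcal U)_p \to \Kan(\mathcal U')_p$ commuting with $E^\star_1$ and $E^\star_2$ essentially for free; two-out-of-three for biequivalences then finishes the forward direction, and Proposition \ref{prop:kaninjective} handles the converse. Without either the strictly commuting strictification or the Kan-injectivity reformulation (or some substitute), your argument does not go through as written.
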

\begin{proof}
The present theorem combines Proposition 4.7 and Theorem 4.11 of \cite{CMV} with the exception that they assume $U$ and $F$ are $2$-functors rather than pseudofunctors.  We now extend their result to cover pseudofunctors by taking a strictification approach.

Therefore, let us assume that the above three properties hold.  We must prove that $U$ is monadic.   To this end, let $j \colon \twocat_s \to \twocat_p$ be the identity-on-objects inclusion of the category of $2$-categories, $2$-functors and icons into the 2-category of $2$-categories, pseudofunctors and icons \cite{Icons}.
This is the inclusion $j \colon \TAlg_s \to \TAlg_p$ where $T$ is the free enriched category 2-monad on $\Cat$-enriched graphs.   It then follows from \cite{BKP}, as explained in Example \ref{classic}, that the inclusion $j \colon \twocat_s \to \twocat_p$ has a left $2$-adjoint $(-)^\prime$ with unit $p_\ca \colon \ca \to \ca^\prime$ an equivalence in the $2$-category $\twocat_p$.  Now equivalences in $\twocat_p$ are precisely the bijective-on-objects pseudofunctors which are local equivalences and, without loss of generality, we may assume $p_\ca$ is the identity-on-objects.  In particular, it is a biequivalence.  

Applying the left $2$-adjoint to the above biadjunction, we thus get a commuting diagram as below left.   

\[\begin{tikzcd}
	\ca && {\ca^\prime} && {\ca(FX,Y)} && {\cb(X,UY)} \\
	\cb && {\cb^\prime} && {\ca^{\prime}(F^\prime X,Y)} && {\cb^\prime(X,U^{\prime}Y)} \\
	&& {}
	\arrow["{p_\ca}", from=1-1, to=1-3]
	\arrow["U", shift left=3, from=1-1, to=2-1]
	\arrow["{U^\prime}", shift left=3, from=1-3, to=2-3]
	\arrow["{U - \circ \eta_X}", from=1-5, to=1-7]
	\arrow["{p_{\ca}}"', from=1-5, to=2-5]
	\arrow["{p_{\cb}}", from=1-7, to=2-7]
	\arrow["F", shift left=3, from=2-1, to=1-1]
	\arrow["{p_\cb}"', from=2-1, to=2-3]
	\arrow["{F^\prime}", shift left=3, from=2-3, to=1-3]
	\arrow["{U' - \circ \eta'_X}"', from=2-5, to=2-7]
\end{tikzcd}\]

At the unit  $\eta_X:X \to UFX$, let $\eta^{\prime}_X = p_{\cb}(\eta_X) \colon X \to UFX =U^\prime F^\prime X$.  Then the diagram above right commutes strictly.  Since the upper horizontal and two vertical maps are equivalences, the lower horizontal morphism is an equivalence too.  Thus the $\eta^{\prime}_X$ are the unit components of a biadjunction $F^\prime \dashv U^\prime$ whose lax-idempotency follows from that of $F \dashv U$.   

For $T=UF$ and $T^\prime = U^\prime F^\prime$, consider the classes $\mathcal U$ and $\mathcal U^\prime$ of unit maps for the respective biadjunctions, and the induced comparisons 
$E^{\star}_1$ and $E^{\star}_2$ from \eqref{eq:comparison2} as below.

\[\begin{tikzcd}
	& \ca & {\ca^\prime} \\
	& {\Kan(\mathcal U)} & {\Kan(\mathcal U^\prime)} \\
	{} & \cb & {\cb^\prime}
	\arrow["{p_A}", from=1-2, to=1-3]
	\arrow["{E^{\star}_1}"', from=1-2, to=2-2]
	\arrow["U"', curve={height=30pt}, from=1-2, to=3-2]
	\arrow["{E^{\star}_2}", from=1-3, to=2-3]
	\arrow["{U^\prime}", curve={height=-30pt}, from=1-3, to=3-3]
	\arrow["K", from=2-2, to=2-3]
	\arrow[hook, from=2-2, to=3-2]
	\arrow[hook, from=2-3, to=3-3]
	\arrow["{p_B}"', from=3-2, to=3-3]
\end{tikzcd}\]


Since the two remaining properties in the theorem are biequivalence-invariant, $U^\prime$ satisfies them since $U$ does.  Therefore since $U^\prime$ and $F^\prime$ are $2$-functors, we can apply  Theorem 4.11 of \cite{CMV} to conclude that $E^\star_2$ is a biequivalence.

Now observe that since $p_B$ is locally an equivalence, $X \in \cb$ is Kan injective to $f$ just when $X=p_B X$ is Kan injective to $p_B f$ with a similar correspondence holding for morphisms of Kan injectives.  Thus we obtain a factorisation $K \colon \Kan(\mathcal U) \to \Kan(\mathcal U^\prime)$ which is again the identity-on-objects.   It is locally an equivalence since $p_B$ is so and since being a morphism of Kan injectives is invariant under isomorphism.  Thus $K$ is a biequivalence.

Since $p_A$ is a biequivalence and the upper square also commutes, it follows by $2$ from $3$ for biequivalences, that $E^\star_1$ is also a biequivalence, as required.

Conversely, suppose that $U$ is monadic for $T=UF$ lax-idempotent.  Then the biadjunction $F \dashv U$ is lax-idempotent since this is equivalent to $T$'s being lax-idempotent.  Furthermore as $E^\star$ in \eqref{eq:comparison2} is a biequivalence, it suffices to prove that the inclusion $\Kan(\mathcal U)_p \to \cb$ has the remaining two properties.  The inclusion $\Kan(\mathcal U)_p \to \cb$ reflects adjoint equivalences since each equivalence preserve left Kan extensions.  Furthermore, by Proposition \ref{prop:kaninjective} Part 4, $\Kan(\mathcal U)_p$ is closed in $\cb$ under any bicolimits that are absolute in $\cb$, and so $\Kan(\mathcal U)_p \to \cb$ satisfies the condition concerning bi-coinverters.  

\end{proof}

Now here is our refinement of the above result.

\begin{theorem}\label{thm:monadicity2}
Let $U \colon \ca \to \cb$ be a pseudofunctor having a left biadjoint and suppose that $\mathcal B$ admits colax bilimits of arrows.  Then $U$ is bicategorically monadic for $T$ a lax-idempotent pseudomonad if and only if
\begin{itemize}
\item $U$ is locally full,
\item $U$ satisfies the colax bilimit property,
\item $U$ reflects adjoint equivalences and
\item $\ca$ has and $U$ preserves bi-coinverters of $U$-absolute bi-coinverters.
\end{itemize}
\end{theorem}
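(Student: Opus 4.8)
The plan is to deduce Theorem~\ref{thm:monadicity2} by combining three earlier results: the recognition criterion for lax-idempotency (Theorem~\ref{thm:laxidempotent}), the characterisation of lax-idempotent pseudomonads via the forgetful $2$-functor (Theorem~\ref{thm:char}), and the pseudofunctorial monadicity theorem (Theorem~\ref{thm:monadicity1}). Throughout I will use that the standing hypothesis --- that $\cb$ admits colax bilimits of arrows --- supplies exactly the colax bilimits referred to both in the colax bilimit property and in the hypotheses of Theorem~\ref{thm:char}.

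For the \emph{if} direction, suppose the four listed conditions hold. The first two, that $U$ is locally full and satisfies the colax bilimit property, are precisely the hypotheses of Theorem~\ref{thm:laxidempotent}, the required colax bilimits existing by the assumption on $\cb$; hence the biadjunction $F \dashv U$ is lax-idempotent and so is $T = UF$. Feeding this lax-idempotency together with the remaining two conditions ($U$ reflects adjoint equivalences, and $\ca$ has and $U$ preserves bi-coinverters of $U$-absolute bi-coinverters) into Theorem~\ref{thm:monadicity1} yields that $U$ is monadic for the lax-idempotent pseudomonad $T$, as required.

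For the \emph{only if} direction, assume $U$ is bicategorically monadic for a lax-idempotent $T$. The converse half of Theorem~\ref{thm:monadicity1} immediately delivers the last two conditions, namely reflection of adjoint equivalences and the bi-coinverter condition, together with lax-idempotency of the biadjunction. It remains to produce conditions (1) and (2). Since $T$ is lax-idempotent and $\cb$ admits colax bilimits of arrows, Theorem~\ref{thm:char} tells us that $U^T \colon \PTAlg_p \to \cb$ is locally full and satisfies the colax bilimit property. Monadicity means the comparison $E \colon \ca \to \PTAlg_p$ is a biequivalence with $U^T E \simeq U$ over $\cb$, so the task reduces to transporting local fullness and the colax bilimit property from $U^T$ to the biequivalent $U$.

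This invariance under biequivalence is the one point that needs a genuine, if short, argument, and is where I expect the only real work to lie. Local fullness passes along $E$ because a biequivalence is locally an equivalence, hence locally full, and composites of locally full pseudofunctors are locally full; local fullness is then preserved under the pseudonatural equivalence $U^T E \simeq U$. For the colax bilimit property, given $f \colon UA \to UB$, I would transport $f$ across the equivalences $UA \simeq U^T E A$ and $UB \simeq U^T E B$, apply the property for $U^T$ to obtain a colax bilimit with vertex $U^T C'$ and projections $U^T p', U^T q'$, then use essential surjectivity and local fullness of the biequivalence $E$ to lift $C'$, $p'$ and $q'$ to an object $C \in \ca$ and morphisms $p, q$, and finally use $U^T E \simeq U$ to recognise the result as a colax bilimit with vertex $UC$ and legs $Up, Uq$. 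Since colax bilimits are determined up to equivalence by their universal property, this shows $U$ satisfies the colax bilimit property, completing the proof.
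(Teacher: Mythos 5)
Your proposal is correct and follows essentially the same route as the paper: the \emph{if} direction via Theorem~\ref{thm:laxidempotent} (which is also what underlies the \emph{if} direction of Theorem~\ref{thm:char}) combined with Theorem~\ref{thm:monadicity1}, and the \emph{only if} direction via Theorem~\ref{thm:char} applied to $U^T$ together with the observation that local fullness and the colax bilimit property are biequivalence-invariant. The paper states this invariance without proof, whereas you sketch the transport argument explicitly; that is the only difference.
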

\begin{proof}
This follows immediately on combining Theorem \ref{thm:char} and Theorem \ref{thm:monadicity1} given that the first two properties are biequivalence-invariant.
%
\end{proof}

\end{document}